\font\gothic=eufm10 scaled 1100
\def\CC#1{\mathrm{#1}}                 
\def\GG#1{\hbox{\gothic{#1}}}        
\def\RR{\mathbb{R}}
\def\D{\,{\rm d}}
\newcommand{\fix}{\hbox{{\rm fix}}}
\newcommand{\tr}{\hbox{{\rm tr}}}
\newcommand{\gl}{\hbox{{\gothic{gl}}}}
\newcommand{\so}{\hbox{{\gothic{so}}}}
\newcommand{\g}{\mbox{\boldmath$g$\unboldmath}}
\newcommand{\dexp}{\hbox{{\rm dexp}}}
\newcommand{\refp}[1]{(\ref{#1})}
\newcommand{\ad}{\hbox{{\rm ad}}}
\newcommand{\id}{\hbox{{\rm id}}}
\newcommand{\ghe}{\hbox{{\gothic{g}}}}
\newcommand{\GL}{{\rm GL}}
\def\Frac#1#2{{\textstyle \frac{#1}{#2}}}
\def\OO#1{{\cal O}\!\left(#1\right)}
\newenvironment{meqn}
{\arraycolsep=1.4pt
  
  \begin{array}{rcl}}
  {\end{array}}
\journalname{\ }
\begin{document}

\title{
Symmetric spaces and Lie triple systems in numerical analysis of differential equations
} 

\titlerunning{Symmetrics spaces and Lie triple systems in numerical analysis of DE}
  
\author{
H. Z. Munthe-Kaas, \and
 G. R. W. Quispel, \and
A. Zanna
}

\institute{
H. Z. Munthe-Kaas, A. Zanna  \at
              Matematisk institutt, Universitet i Bergen, Johannes Brunsgt 12, N-5008, Bergen, Norway \\
              \email{Hans.Munthe-Kaas@math.uib.no, Antonella.Zanna@math.uib.no}
              \and
	G. R. W. Quispel \at Department of Mathematics, La Trobe
  University, Bundoora,  Melbourne 3083, Australia. \\
  	\email{r.quispel@latrobe.edu.au}
}

\date{\today}

\maketitle
\begin{abstract}
  A remarkable number of different numerical
  algorithms can be understood and analyzed using the concepts of
  symmetric spaces and Lie triple systems, which are well known
  in differential geometry from the study of spaces of constant
  curvature and their tangents.
  This theory can be used to unify a range of different
  topics, such as polar-type matrix decompositions,
  splitting methods for computation of the matrix exponential,
  composition  of selfadjoint numerical integrators and dynamical systems 
  with symmetries and reversing symmetries.
  The thread of this paper is the following: involutive automorphisms on groups induce a factorization at a group level, and a splitting at the algebra level. 
  In this paper we will give an introduction to the
  mathematical theory behind these constructions, and review recent
  results. 
  Furthermore, we present a new Yoshida-like technique, for self-adjoint numerical schemes, that allows to increase the order 
  of preservation of symmetries by two units.
  Since all the time-steps are positive, the technique is particularly 
  suited to stiff problems, where a negative time-step can cause 
  instabilities.

\keywords{Geometric integration \and symmetric spaces \and differential equations }
\subclass{53C35 \and 58J70}

\end{abstract}

\section{Introduction}
\label{sec:1}
In numerical analysis there exist numerous examples of objects forming a
\emph{group}, i.e.\ objects that compose in an associative manner,
have an inverse and identity element.
Typical examples are the group of orthogonal matrices or the group  of
Runge--Kutta methods. \emph{Semigroups}, sets of objects close under composition
but not inversion, like for instance the set of all matrices and
explicit Runge--Kutta methods,  are also well studied in literature.

However, there are important examples of objects that are neither a group nor a
semigroup. One important case is the class of objects closed under a 
`sandwich-type' product, $(a,b) \mapsto aba$. For example, the collection of all
symmetric positive definite matrices and all selfadjoint Runge--Kutta
methods. 
The sandwich-type composition $aba$ for numerical integrators was studied
at length in~\cite{mclachlan98nit} and references therein. However, if
inverses are well defined, we may wish to replace
the sandwich product with the algebraically nicer \emph{symmetric
  product}
$(a,b) \mapsto ab^{-1}a$. Spaces closed under such products are called 
symmetric spaces and are the objects of study in this paper. 
There is a parallel between  the theory of Lie groups and that of 
symmetric spaces. For Lie groups, 
fundamental  tools are the Lie algebra (tangent space at the
identity, closed under commutation) and the exponential map from the Lie 
algebra to the Lie group.
 In the theory of symmetric spaces there is a similar notion of tangent
space. The resulting object is called a Lie triple system (LTS), and is
closed under \emph{double commutators}, $[X,[Y,Z]]$.  Also 
in this case, the exponential maps the LTS into the symmetric space.

An important decomposition theorem is associated with symmetric spaces and Lie triple systems: 
Lie algebras can be decomposed into a direct sum of a LTS and a subalgebra. The well known splitting of a
matrix as a sum of a symmetric and a skew symmetric matrix is an
example of such a decomposition, the skew symmetric matrices are
closed under commutators, while the symmetric matrices are closed
under double commutators. 
Similarly, at the group level, there are decompositions of Lie
groups into a product of a symmetric space and a Lie subgroup. The
matrix polar decomposition, where a matrix is written as the product
of a symmetric positive definite matrix and an orthogonal matrix is
one example. 

In this paper, we are concerned with the application of such structures to the numerical analysis of differential equations of evolution.
The paper is organised as follows: in \S2 we review some general theory of symmetric spaces and Lie triple systems.
Applications of this theory in numerical analysis of differential equations are discussed in 
\S3, which, in turn, can be divided into two parts. In the first (\S3.1--\S3.3), we review and discuss the case of differential equations on matrix groups. The properties of these decompositions and numerical algorithms based
on them are studied in a series of 
papers.  Polar-type decompositions are studied in detail 
in~\cite{munthe-kaas01gpd},  with special emphasis on optimal 
approximation results. The paper~\cite{zanna00rrf} is concerned with important
recurrence relations for polar-type decompositions, similar to the
Baker-Campbell-Hausdorff formula for Lie groups, while 
\cite{zanna01gpd,iserles05eco,krogstad01alc} employ this theory to reduce the 
implementation costs of numerical methods on Lie groups 
\cite{iserles00lgm,munthe-kaas99hor}.
We mention that polar-type decompositions are also 
closely related to the more special root-space decomposition employed 
in numerical integrators for differential equations on Lie groups
in~\cite{owren01imb}. In~\cite{krogstad03gpc} it is shown that
the generalized polar decompositions can be employed in
cases where the theory of~\cite{owren01imb} cannot be used.

In the second part of \S3 (\S\ref{sec:3.3} and beyond), we will consider the application of this theory to numerical methods for the solution of differential equations with symmetries and reversing symmetries. By backward error analysis, numerical methods can be thought of as exact flows of nearby vector fields. The main goal is then to remove from the numerical method the \emph{undesired} part of the error (either the one destroying the symmetry or the reversing symmetry). These error terms in the modified vector field generally consist of complicated derivatives of the vector field itself and are not explicitly calculated, they are just used formally for analysis of the method. In this context, the main tools are compositions at the group level, using the flow of the numerical method and the symmetries/reversing symmetries, together with their inverses. There is a substantial difference between preserving reversing symmetries and symmetries for a numerical method: the first can be always be attained by a finite (2 steps) composition (Scovel projection \cite{scovel91sni}), the second requires in general an infinite composition. Thus symmetries are generally more difficult to retain than reversing symmetries. For the retention of symmetry, we review the Thue--Morse symmetrization technique for arbitrary methods and present a new Yoshida-like symmetry retention technique for self-adjoint methods. The latter has always positive intermediate step sizes and can be of interest in the context of stiff problems, which typically require step size restrictions. We illustrate the use of these symmetrisation methods by some numerical experiments.

Finally, Section~4 is devoted to some concluding remarks.

\section{General theory of symmetric spaces and Lie triple systems}
\label{sec:2}
In this section we present some background theory for symmetric spaces
and Lie triple systems. We expect the reader to be familiar with some
basic concepts of differential geometry, like manifolds, vector
fields, etc.  
For a more detailed treatment of symmetric spaces we refer the reader
to \cite{helgason78dgl} and \cite{loos69sp1} which also
constitute the main reference of the material presented in this section. 

We shall also follow (unless otherwise mentioned) the notational
convention of \cite{helgason78dgl}: in particular, $M$ is a set
(manifold), the letter $G$ is reserved for groups and Lie groups,
gothic letters denote Lie algebras and Lie triple systems,
latin lowercase letters denote
Lie-group elements and latin uppercase letters denote Lie-algebra
elements. The identity element of a group will usually be denoted by
$e$ and the identity mapping by $\id$.

\subsection{Symmetric spaces}
\label{sec:2.1}
\begin{definition}[See \cite{loos69sp1}]\label{def:2.1}
  A {\em symmetric space\/} is a manifold $M$ with a 
     differentiable \emph{symmetric product} $\cdot$ obeying the following 
     conditions:
    \begin{itemize}
        \item[(i)] $x \cdot x  =  x,$
        \item[(ii)] $x \cdot(x \cdot y) =  y,$
        \item[(iii)] $x \cdot(y \cdot z)  =  (x \cdot y) \cdot (x \cdot z),$
        \end{itemize}
        and moreover
        \begin{itemize}
        \item[(iv)] every $x$ has a neighbourhood $U$ such that  for all $y$ in $U$ $x 
        \cdot y = y$ implies $y=x$. 
    \end{itemize} 
\end{definition}
The latter condition is relevant in the case of manifolds $M$ with
open set topology (as in the case of Lie groups) and can be
disregarded for sets $M$ with discrete topology: a discrete set $M$
endowed with a multiplication obeying (i)--(iii) will be also called a
symmetric space.

A {\em pointed symmetric space\/} is a pair $(M,o)$ consisting of
a symmetric space $M$ and a point $o$ called {\em base point.} Note
that when $M$ is a Lie group, it is usual to set $o=e$. Moreover, if the group is a \emph{matrix} group with the usual matrix multiplication, it is usual to set $e=I$ (identity matrix).

The left multiplication with an element $x \in M$ is denoted by $S_x$,
\begin{equation}
  S_x y = x \cdot y, \quad \forall y \in M,
  \label{eq:symmetry}
\end{equation}
and is called {\em symmetry around} $x$. Note that $S_x x = x$ because
of (i), hence $x$ is fixed point of $S_x$ and it is isolated because
of (iv). Furthermore, (ii) and (iii) imply $S_x$ is an involutive
automorphism of $M$, i.e.\ $S_x^2 = \id$.

Symmetric spaces can be constructed in several different ways, the following are important examples:
\begin{enumerate}
\item\label{item:1} Manifolds with an intrinsically defined symmetric product. As an
  example, consider the $n$-sphere as the set of
  unit vectors in $\RR^{n+1}$.  The product
  \begin{displaymath}x\cdot y = S_x y = (2xx^T -I)y
    \end{displaymath}
    turns this into a symmetric space. The above operation is the reflection of points on a sphere. This can be generalized to $m$-dimensional subspaces  in $\RR^n$ ($m\leq n$) in the following manner: Assume that $x=[x_1, \ldots, x_m]$ is a full rank matrix. Define $P_x$ the orthogonal projection operator onto the range of $x$, $P_x = x(x^*x)^{-1} x^*$. Consider the reflection $R_x= 2P_x - I$. Define $x\cdot y = R_x y$. This operation obeys the conditions (i)--(vi) whenever $x,y,z$ are $m\times n$ full rank matrices.  
In particular, note that (i) is equivalent to $R_x^2=I$, i.e.\ the reflection is an involutive matrix. 
        
\item\label{item:2} Subsets of a continuous (or discrete) group $G$ that are closed 
  under the composition $x\cdot y = x y^{-1} x$, where $xy$ is the 
  usual multiplication in $G$. Groups themselves, continuous, as in the 
  case of Lie groups, or discrete, are thus particular instances of
  symmetric spaces. As another example, consider
  the set of all symmetric positive
  definite matrices as a subset of all nonsingular matrices, which 
  forms a symmetric space with the product
 \begin{displaymath}
 a\cdot b = ab^{-1}a .
\end{displaymath} 

\item\label{item:3} Symmetric elements of automorphisms on a group. 
  An automorphism on
  a group $G$ is a map $\sigma:{G} \to {G}$ satisfying
  $\sigma(ab)=\sigma(a)\sigma(b)$. The \emph{symmetric elements} are defined as
  \begin{displaymath}
   \mathcal{A} = \{ g\in G \; : \; \sigma(g)=g^{-1} \} .
  \end{displaymath}
 It is easily verified that $\mathcal{A}$ obeys (i)--(iv) when endowed with the multiplication
  $x\cdot y = x y^{-1} x$, hence it is a symmetric space. As an
  example, symmetric matrices are symmetric elements under the matrix
  automorphism $\sigma(a) = a^{-T}$.

\item\label{item:4} Homogeneous manifolds. Given a Lie group $G$ and a subgroup $H$,
  a homogeneous manifold $M=G/H$ is the set of all left cosets of $H$ in
  $G$.  Not every homogeneous manifold possesses a product turning it into a
  symmetric space, however, we will see in Theorem~\ref{th:2.1} that any connected symmetric
  space arises in a natural manner as a homogeneous manifold.
\item\label{item:5} Jordan algebras. Let $\GG{a}$ be a finite-dimensional vector space with a bilinear multiplication\footnote{Typically non-associative.} $x\ast y$ such that 
\begin{displaymath}
	x\ast y = y\ast x, \qquad x\ast(x^2 \ast y) = x^2 \ast(x\ast y)
\end{displaymath}
(powers defined in the usual way, $x^m = x\ast x^{m-1}$), with unit element $e$. Define $L_x(y)= x\ast y$ and set $P_x = 2L_x^2 - L_{x^2}$. Then, the set of \emph{invertible} elements of $\GG {a}$ is a symmetric space with the product 
\begin{displaymath}
	x\cdot y = P_x (y^{-1}).
\end{displaymath} 
In the context of symmetric matrices, take $x\ast y = \frac12(xy+yx)$, where $xy$ denotes the usual matrix multiplication. After some algebraic manipulations, one can verify that the product $x\cdot y =P_x(y^{-1})= 2 x\ast(x\ast y^{-1}) - (x\ast x)\ast y^{-1}= xy^{-1}x$  as in example~\ref{item:2}.
\end{enumerate}

Let $G$ be a connected Lie group and let $\sigma$ be an analytic involutive
automorphism, i.e.\ $\sigma \not= \id$ and $\sigma^2 =
  \id$. Let $G^\sigma$ denote $\fix \sigma=\{ g\in G \; : \; \sigma(g)=g\}$, $G^\sigma_e$ its
connected component including the base point, in this case the
identity element $e$ and finally let $K$ be a closed subgroup such that
$G^\sigma_e \subset K \subset G^\sigma$. Set $G_\sigma = \{ x \in G:
\sigma(x) = x^{-1} \}$.

\begin{theorem}[\cite{loos69sp1}]
  \label{th:2.1}
  The homogeneous space $M = G/K$ is a symmetric space with the product $xK \cdot
  yK = x \sigma(x)^{-1} \sigma(y)K$ and $G_\sigma$ is a symmetric
  space with the product $x \cdot y = x y^{-1} x$. The space of symmetric elements $G_\sigma$
  is isomorphic to the homogeneous space $G/G^\sigma$.
   Moreover, every connected symmetric space is of the type $G/K$  and also of the type $G_\sigma$.
\end{theorem}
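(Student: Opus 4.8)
The plan is to take the four assertions in turn; the two product constructions and the identification $G_\sigma\simeq G/G^\sigma$ are direct computations, while the converse — that every connected symmetric space is of these two types — carries all the weight. I would first show that $M=G/K$ with $xK\cdot yK=x\sigma(x)^{-1}\sigma(y)K$ is a symmetric space. Well-definedness is immediate: replacing $x$ by $xk$ and $y$ by $yk'$ with $k,k'\in K$ and using $\sigma|_{K}=\id$ (valid since $K\subset G^\sigma$) gives $xk\,\sigma(xk)^{-1}\sigma(yk')=x\sigma(x)^{-1}\sigma(y)\,k'$, the same left coset. Axioms (i)--(iii) of Definition~\ref{def:2.1} then reduce to one-line identities using only $\sigma^2=\id$; for instance (ii) is $x\sigma(x)^{-1}\sigma\bigl(x\sigma(x)^{-1}\sigma(y)\bigr)=x\sigma(x)^{-1}\sigma(x)\,x^{-1}y=y$. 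For the isolatedness axiom (iv) I would linearise at the base point: the symmetry $S_{eK}$ is the map $gK\mapsto\sigma(g)K$ induced by $\sigma$, whose differential at $eK$ is the map induced by $d\sigma_e$ on $T_{eK}(G/K)$; since the $+1$-eigenspace of $d\sigma_e$ on the Lie algebra of $G$ is the Lie algebra of $G^\sigma$, and $G^\sigma_e\subset K\subset G^\sigma$ forces this to coincide with the Lie algebra of $K$, the induced map has no nonzero $+1$-eigenvectors at $eK$, so $eK$ is an isolated fixed point, and translating by $G$ propagates (iv) to every point. For $G_\sigma$ with $x\cdot y=xy^{-1}x$, closure and axioms (i)--(iii) are exactly the check indicated in Example~\ref{item:3} above applied to the present $\sigma$, while the manifold structure and axiom (iv) for $G_\sigma$ come for free from the identification with $G/G^\sigma$ established next.

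Next, for $G_\sigma\simeq G/G^\sigma$, I would use the quadratic map $\Phi\colon G\to G$, $\Phi(g)=g\,\sigma(g)^{-1}$. One checks $\sigma(\Phi(g))=\Phi(g)^{-1}$, so $\Phi$ lands in $G_\sigma$, and $\Phi(g)=\Phi(h)$ iff $h^{-1}g=\sigma(h^{-1}g)$, i.e.\ iff $gG^\sigma=hG^\sigma$, so $\Phi$ factors through an injection $\overline\Phi\colon G/G^\sigma\hookrightarrow G_\sigma$. The differential of $\Phi$ has constant rank $\dim G-\dim G^\sigma$, so $\Phi$ is a submersion and $\overline\Phi$ an immersion of that dimension — hence a diffeomorphism onto an open and closed subset of $G_\sigma$, which for $G$ connected is the component of $e$ (all that a symmetric-space isomorphism can detect). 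A short computation gives $\Phi(x)\Phi(y)^{-1}\Phi(x)=\Phi\bigl(x\sigma(x)^{-1}\sigma(y)\bigr)$, i.e.\ $\overline\Phi$ intertwines the $G/G^\sigma$-product with $x\cdot y=xy^{-1}x$; this simultaneously equips $G_\sigma$ with its symmetric-space structure and proves the isomorphism.

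The converse is the substantial part. Given a connected symmetric space $(M,o)$, let $G$ be the subgroup of $\mathrm{Aut}(M)$ generated by the transvections $S_xS_y$ (equivalently, the identity component of the group generated by all symmetries $S_x$), keeping in mind the equivariance $\phi\,S_x\,\phi^{-1}=S_{\phi(x)}$ valid for any automorphism $\phi$. The steps are: (a) $G$ acts transitively on $M$ — this is where connectedness enters, via the argument that the $G$-orbit of $o$ is open, since $x\mapsto(S_xS_o)(o)$ is a local diffeomorphism at $o$ by (iv) and the inverse function theorem, and also closed, hence all of $M$; (b) $G$ carries an analytic Lie group structure for which $G\times M\to M$ is analytic, which Loos constructs directly from the local structure of $M$; (c) $\sigma(g):=S_o\,g\,S_o$ is a nontrivial analytic involutive automorphism of $G$ (well defined because $S_oS_xS_o=S_{S_o(x)}$); (d) the isotropy group $K$ of $o$ satisfies $G^\sigma_e\subset K\subset G^\sigma$ — the right inclusion because $g\in K$ forces $g\,S_o\,g^{-1}=S_{g(o)}=S_o$, i.e.\ $\sigma(g)=g$; the left inclusion because $\sigma(g)=g$ makes $g(o)$ a fixed point of $S_o$, so by the isolatedness of $o$ any such $g$ near $e$ lies in $K$; (e) transporting the canonical product on $G/K$ to $M$ by $gK\mapsto g(o)$ recovers the original product, since $g_1\sigma(g_1)^{-1}\sigma(g_2)=g_1S_og_1^{-1}g_2S_o$ and applying this to $o$ gives $g_1S_og_1^{-1}(g_2(o))=S_{g_1(o)}(g_2(o))$. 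The $G_\sigma$-realisation then follows by combining this with the previous paragraph for a suitable choice of the group (so that $K=G^\sigma$, or after passing to a cover of $M$).

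I expect the genuine obstacles to be step (b) — endowing the abstractly defined transvection group with a compatible analytic Lie group structure and proving the action is smooth — together with the reconciliation of $K$ with $G^\sigma$ needed for the $G_\sigma$ form in the last sentence; everything else is bookkeeping with $\sigma^2=\id$ and the equivariance of symmetries, and for the hard analytic input I would rely on \cite{loos69sp1} and \cite{helgason78dgl} rather than reprove it.
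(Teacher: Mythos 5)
The paper offers no proof of Theorem~\ref{th:2.1}; it is quoted from \cite{loos69sp1} without argument, so there is no in-text proof to compare yours against. Judged on its own terms, your outline is correct and follows the standard route. The well-definedness and axioms (i)--(iii) checks for both products are right; the linearisation argument for axiom (iv) at $eK$ (the symmetry $gK\mapsto\sigma(g)K$ has differential $-\id$ on $T_{eK}(G/K)\cong\ghe/\GG{k}\cong\GG{p}$ precisely because $G^\sigma_e\subset K\subset G^\sigma$ forces $\mathrm{Lie}(K)=\GG{k}$) is the right mechanism; and the identity $\Phi(x)\Phi(y)^{-1}\Phi(x)=\Phi\bigl(x\sigma(x)^{-1}\sigma(y)\bigr)$ for $\Phi(g)=g\sigma(g)^{-1}$ is exactly what makes the two models isomorphic. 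Two points deserve flagging, both of which you essentially acknowledge rather than gloss over. First, $\overline\Phi\colon G/G^\sigma\to G_\sigma$ is a diffeomorphism only onto the image of $\Phi$, which even for connected $G$ is in general a proper union of connected components of $G_\sigma$: for $\sigma(x)=x^{-\CC{T}}$ on the group of positive-determinant matrices the image is the positive definite cone, while $G_\sigma$ also contains, e.g., $\mathrm{diag}(-1,-1)$. This is consistent with the paper's own remark in \S\ref{sec:3.1} that $G_\sigma$ is disconnected there, and the theorem's ``isomorphic'' must be read with that restriction. Second, for the final claim that a connected $M$ is itself of type $G_\sigma$, the cleaner device in \cite{loos69sp1} is the quadratic representation $x\mapsto S_xS_o$, which by $S_{x\cdot y}=S_xS_yS_x$ and $S_o^2=\id$ maps $M$ directly onto a set of symmetric elements of $\sigma(g)=S_ogS_o$, avoiding the need to arrange $K=G^\sigma$ or pass to covers. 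With those caveats, and with the analytic Lie-group structure on the transvection group delegated to Loos as you propose, the outline is sound.
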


The interesting consequence of the above theorem is that \emph{every
connected symmetric space} is also a homogeneous space, which implies a factorization: as coset representatives for  $G/G^\sigma$ one may choose
elements of $G_\sigma$, thus any $x\in G$ can be decomposed in a
product $ x = pk$, where $ p\in G_\sigma \mbox{ and } k\in G^\sigma$.  In other words,
\begin{equation}
  \label{eq:polardecomp}
  x = pk,  \qquad \sigma(k) = k, \quad \sigma(p) = p^{-1}, \qquad \hbox{(group factorization)}  .
\end{equation}
The matrix polar decomposition is a particular example, discussed in 
\S\ref{sec:3.1}.  

The automorphism $\sigma$ on $G$ induces an automorphism on the Lie
algebra $\ghe$ and also a canonical decomposition of $\ghe$.
Let $\GG{g}$ and $\GG{k}$ denote the Lie algebras of $G$ and $K$
respectively and denote by $\D\sigma$ the differential of $\sigma$ at
$e$,
\begin{equation}
  \label{eq:2.1}
  \D\sigma(X) = \frac{\D}{\D t}\Big|_{t=0} \sigma(\exp(tX)),
  \quad \forall X \in \ghe.
\end{equation}
Note that $\D\sigma$ is an involutive automorphism of
$\ghe$ and has eigenvalues $\pm1$. Moreover, $X \in \GG{k}$ implies
$\D\sigma(X) =X$. Set $\GG{p} = \{ X \in \ghe : \D\sigma(X)
=-X\}$. Then,
\begin{equation}
  \label{eq:2.2}
  \ghe = \GG{p} \oplus \GG{k},
\end{equation}
\cite{helgason78dgl}.
 It is easily verified that
\begin{equation}
  \label{eq:2.3}
    [\GG{k}, \GG{k}] \subset \GG{k}, \quad     {[\GG{k}, \GG{p}]} \subset \GG{p}, \quad
    {[\GG{p}, \GG{p}]} \subset \GG{k},
\end{equation}
that is, $\GG{k}$ is a subalgebra of $\ghe$ while
$\GG{p}$ is an ideal in $\GG{k}$. 
Given $X \in \ghe$, its canonical decomposition $\GG{p}\oplus\GG{k}$ is $X=P+K$, with $P \in \GG{p}$ and $K \in \GG{k}$,
\begin{equation}
  \label{eq:algebrasplitting}
  X = P + K, \qquad \D\sigma(K) = K, \quad \D \sigma(P) = -P,  \qquad \hbox{(algebra splitting)}.
\end{equation}

We have already observed that there is close connection between projection matrices, reflections (involutive matrices) and hence symmetric spaces. In a linear algebra context, this statement can be formalized as follows. Recall that a matrix $\Pi$ is a \emph{projection} if $\Pi^2=\Pi$.  
\begin{lemma}
	\label{th:proj}
	To any projection matrix $\Pi$ there corresponds an involutive matrix $S= I-2\Pi$. Conversely, to any involutive matrix $S$ there 	correspond two projection matrices $\Pi_S^-= \frac12 (I-S)$ and $\Pi_S^+=\frac12(I+S)$. These projections satisfy $\Pi_S^- + 	\Pi_S^+ = I$ and $\Pi_S^- \Pi_S^+ = \Pi_S^+ \Pi_S^{-} =0 $, moreover $S\Pi_S^{\pm} = 	\pm\Pi_S$, i.e. \ the projection $\Pi_S^\pm$ projects onto the $\pm1$ eigenspace of $S$.
 \end{lemma}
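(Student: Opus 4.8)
The plan is to verify the two directions of the correspondence directly and then check the listed algebraic identities by elementary matrix manipulations. First, given a projection $\Pi$ with $\Pi^2 = \Pi$, I would set $S = I - 2\Pi$ and compute $S^2 = (I-2\Pi)^2 = I - 4\Pi + 4\Pi^2 = I - 4\Pi + 4\Pi = I$, so $S$ is involutive. Conversely, given $S$ with $S^2 = I$, I would define $\Pi_S^{\pm} = \frac12(I \pm S)$ and compute $(\Pi_S^{\pm})^2 = \frac14(I \pm S)^2 = \frac14(I \pm 2S + S^2) = \frac14(2I \pm 2S) = \frac12(I \pm S) = \Pi_S^{\pm}$, so each is a projection.

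Next I would establish the relations among $\Pi_S^-$ and $\Pi_S^+$. The sum is immediate: $\Pi_S^- + \Pi_S^+ = \frac12(I-S) + \frac12(I+S) = I$. For the product, $\Pi_S^- \Pi_S^+ = \frac14(I-S)(I+S) = \frac14(I - S^2) = \frac14(I - I) = 0$, and the same computation with the factors reversed (since $I$ and $S$ commute) gives $\Pi_S^+\Pi_S^- = 0$. Finally, for the eigenspace claim, $S\Pi_S^{\pm} = S\cdot\frac12(I\pm S) = \frac12(S \pm S^2) = \frac12(S \pm I) = \pm\frac12(I\pm S) = \pm\Pi_S^{\pm}$; combined with $\Pi_S^{\pm}\Pi_S^{\pm} = \Pi_S^{\pm}$, this shows that every vector in the range of $\Pi_S^{\pm}$ is a $\pm 1$ eigenvector of $S$, and conversely if $Sv = \pm v$ then $\Pi_S^{\pm}v = \frac12(v \pm v) = v$, so the range of $\Pi_S^{\pm}$ is exactly the $\pm 1$ eigenspace of $S$.

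Since all of this is a short sequence of expansions using only $S^2 = I$ (or $\Pi^2 = \Pi$) and the commutativity of $I$ with everything, there is no real obstacle; the only point requiring a word of care is the interpretation of "$\Pi_S^{\pm}$ projects onto the $\pm1$ eigenspace," which I would justify by the two-sided inclusion argument just sketched (range $\subseteq$ eigenspace from $S\Pi_S^{\pm} = \pm\Pi_S^{\pm}$, eigenspace $\subseteq$ range from $\Pi_S^{\pm}$ fixing eigenvectors). I note in passing that the displayed conclusion "$S\Pi_S^{\pm} = \pm\Pi_S$" in the statement appears to be a typo for $S\Pi_S^{\pm} = \pm\Pi_S^{\pm}$, which is what the computation yields and what the eigenspace interpretation requires.
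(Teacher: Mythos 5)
Your verification is correct and complete: the paper states this lemma without proof, and your direct expansions using only $\Pi^2=\Pi$ and $S^2=I$ are exactly the intended elementary argument, including the two-sided inclusion for the eigenspace claim. You are also right that ``$S\Pi_S^{\pm}=\pm\Pi_S$'' in the statement is a typo for $S\Pi_S^{\pm}=\pm\Pi_S^{\pm}$.
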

Note that if $S$ is involutive, so is $-S$, which corresponds to the opposite identification of the $\pm1$ eigenspaces. A matrix $K$, whose columns are in the span of the $+1$ eigenspace is said to be \emph{block-diagonal} with respect to the automorphism, while a matrix $P$, whose columns are in the span of the $-1$ eigenspace, is said to be \emph{2-cyclic}.

In the context of Lemma~\ref{th:proj}, we recognize that $P \in \GG{p}$ is the 2-cyclic part and $K \in \GG{k} $ is the block-diagonal part. Namely, if $X$ is represented by a matrix, then
\begin{displaymath}
	X = \left( \begin{array}{cc} X^{--} & X^{-+} \\ X^{+-} & X^{++}
	\end{array} 
	\right),
\end{displaymath}
where $X^{ij} = \Pi_S^{i} X \Pi_S^j$ restricted to the appropriate subspaces. Then,  $X^{--}$ and  $X^{++}$ corresponds to the block-diagonal part $K$ and $X^{-+}, X^{+-}$ corresponding to the 2-cyclic part $P$.

In passing, we mention that the decomposition
\refp{eq:2.3} is called a {\em Cartan decomposition} whenever the 
Cartan--Killing form $B(X,Y) = \tr(\ad_{X}\ad_{Y})$ is nondegenerate, 
hence it can be used to introduce a positive bilinear form 
$B_{\D\sigma}=-B(X, \D\sigma(Y))$. If this is the case, the linear subspaces $\GG{k}, \GG{p}$ are orthogonal.

The involutive automorphism $\sigma$ need not be defined at the group level $G$ and thereafter lifted to the algebra by \refp{eq:2.1}. It is possible to proceed the other way around: an involutive algebra automorphism $\D\sigma$ on $\ghe$, which automatically produces a decomposition \refp{eq:2.2}-\refp{eq:2.3}, can be used to induce a group automorphism $\sigma$ by the relation
\begin{equation}
	\sigma (x) = \exp(\D\sigma (\log x)),
	\label{eq:downstairs}
\end{equation}
and a corresponding group factorization \refp{eq:polardecomp}.
Thus, we have an ``upstairs-downstairs'' viewpoint: the group involutive automorphisms generate corresponding algebra automorphisms and vice versa.
This ``upstairs-downstairs'' view is useful: in some cases, the group factorization \refp{eq:polardecomp} is difficult to compute starting from $x$ and $\sigma$, while the splitting at the algebra level might be easy to compute from $X$ and $\D\sigma$. In other cases, it might be the other way around.

\subsection{Lie triple systems}
In Lie group theory Lie algebras are important since they describe
infinitesimally the structure of the tangent space at the
identity. Similarly, Lie triple systems give the structure of the
tangent space of a symmetric space.
\label{sec:2.2}
\begin{definition}(\cite{loos69sp1})
\label{def:2.2}
  A vector space with a trilinear composition $[X,Y,Z]$ is called a
  {\em Lie triple system\/} (LTS) if the following identities are satisfied:
  \begin{itemize}
  \item[(i)] $[X,X,X]=0$,
  \item[(ii)] $[X,Y,Z]+[Y,Z,X] + [Z,X,Y] =0$,
  \item[(iii)] $[X,Y,[U,V,W]] = [[X,Y,U], V,W] + [U, [X,Y,V],W] +
    [U,V,[X,Y,W]]$. 
  \end{itemize}
\end{definition}
A typical way to construct a LTS is by means of an involutive
automorphism of a Lie algebra $\ghe$. With the same notation as above,
the set $\GG{p}$ is a LTS with the composition
\begin{displaymath}
  [X,Y,Z] = [[X,Y], Z].
\end{displaymath}
Vice versa, for every LTS there exists a Lie algebra $\GG{G}$ and an involutive
automorphism $\sigma$ such that the given LTS corresponds to
$\GG{p}$. The algebra $\GG{G}$ is called {\em standard
  embedding\/} of the LTS.
In general, any subset of $\ghe$ that is closed under the operator
\begin{equation}
    \CC{T}_{X} (\mbox{}\cdot\mbox{})= \ad_{X}^{2} (\mbox{}\cdot\mbox{})= [X, [X, \mbox{}\cdot\mbox{}]]
    \label{eq:T_X}
\end{equation}
is a Lie triple system. It can be shown that being closed under $\CC{T}_{X}$ guarantees 
being closed under the triple commutator. 
%

\section{Application of symmetric spaces in numerical analysis}
\label{sec:3}

\subsection{The classical polar decomposition of matrices}
\label{sec:3.1}
Let $\GL(N)$ be the group of $N\times N$ invertible 
matrices. Consider the map
\begin{equation}
  \label{eq:3.1}
  \sigma(x) = x^{-\CC{T}}, \qquad x \in \GL(N).
\end{equation}
It is clear that $\sigma$ is an involutive automorphism of
$\GL(N)$. Then, according to Theorem~\ref{th:2.1}, the set of
symmetric elements $G_\sigma = \{ x \in \GL(N): \sigma(x) =x^{-1}\}$
is a symmetric space. We observe that $G_\sigma$ is the set of
invertible symmetric matrices. The symmetric space $G_\sigma$ is
disconnected and particular mention deserves its connected component
containing the identity matrix $I$, since it reduces to the set of
symmetric positive definite matrices. The subgroup $G^\sigma$ consists of
all orthogonal matrices. The decomposition~(\ref{eq:polardecomp}) is
the classical polar decomposition, any nonsingular matrix can be written as a
product of a symmetric matrix and an orthogonal matrix. If we restrict
the symmetric matrix to the symmetric positive definite (spd) matrices, then
the decomposition is unique. 
In standard notation, $p$ is denoted by $s$ (spd matrix), while $k$ is denoted by $q$ (orthogonal matrix).\footnote{Usually, 
matrices are denoted by upper case letters. Here we hold on the convention described in \S.2.} 
At the algebra level, the corresponding splitting is $\ghe = \GG{p}\oplus \GG{k}$, where 
$ 
    \GG{k} = \{ X \in \gl(N) : \D\sigma(X) = X \} = \so(N),
$ 
is the classical algebra of skew-symmetric matrices, while
$ 
    \GG{p} = \{ X \in \gl(N) : \D\sigma(X) = -X \} 
$ 
is the classical set of symmetric matrices. The latter is not a 
subalgebra of $\gl(N)$ but is closed under $\CC{T}_{X}$, hence is a Lie triple system.
The decomposition \refp{eq:algebrasplitting} is nothing else than the
canonical decomposition of a matrix into its skew-symmetric and
symmetric part,
$ 
  X = P+K = \frac12(X-\D\sigma(X)) + \frac12(X+\D\sigma(X)) = 
  \frac12(X-X^{\CC{T}}) + \frac12(X+X^{\CC{T}}).
$ 
It is well known that the polar decomposition $x = sq$ can be characterized in
terms of best approximation properties. The orthogonal part $q$ is the
best orthogonal approximation of $x$ in any orthogonally invariant
norm
(e.g.\ 2-norm and Frobenius norm).
Other classical polar decompositions $x=sq$, with $s$ Hermitian and 
$q$ unitary, or with $s$ real and $q$ coinvolutory (i.e.\ $q\bar q=I$), 
can also be fitted in this framework \cite{debruijn55ose} with the 
choice of automorphisms $\sigma(x) = x^{-*}= \bar x^{-\CC{T}}$ 
(Hermitian adjoint), and $\sigma(x) = \bar{x}$ respectively (where $\bar x$ denotes the complex conjugate of $x$). 

The group decomposition $x=sq$ can also be studied via the algebra
decomposition.

\subsection{Generalized polar decompositions}
In~\cite{munthe-kaas01gpd} such decompositions are generalized to
arbitrary involutive automorphisms, and best approximation properties
are established for the general case.

In~\cite{zanna00rrf} an explicit recurrence is given, if $\exp(X) =
x$, $\exp(S)=s$ and $\exp(Q)=q$ then $S$ and $Q$ can be expressed in
terms of commutators of $P$ and $K$.
 The first terms in the expansions of $S$ and $Q$ are
\begin{equation}
    \begin{meqn}
    S &=& P - \Frac12 [P,K]  - \Frac16[K,[P,K]] \\
    && \mbox{} + \Frac1{24}[P,[P,[P,K]]] - 
    \Frac1{24}[K,[K,[P,K]]] 
    \\
    && \mbox{} + [K,[P,[P,[P,K]]]] - \Frac1{120}[K,[K,[K,[P,K]]]] 
      - \Frac1{180}[[P,K],[P,[P,K]]] + \cdots,\\
    Q &=&  K - \Frac1{12}[P,[P,K]] + \Frac1{120}[P,[P,[P,[P,K]]]]\\
    && \qquad \mbox{}   + \Frac1{720}[K,[K,[P,[P,K]]]] 
      - \Frac1{240}[[P,K],[K,[P,K]]] + \cdots.
    \end{meqn}
    \label{eq:theory.6}
\end{equation}

Clearly, also other types of automorphisms can be considered, generalizing the group factorization \refp{eq:polardecomp} and the algebra splitting \refp{eq:algebrasplitting}. 
For instance, there is a large source of involutive automorphisms in the set of involutive \emph{inner automorphisms} 
\begin{equation}
\sigma(x) = rxr, \qquad \D\sigma(X) = r X r,
\label{eq:inner_auto}
\end{equation}
that can be applied to subgroups of $G=GL(n)$ to obtain a number of interesting factorizations. The matrix $r$ has to be involutive, $r^2=I$, but it need not be in the group: the factorization makes sense as long as $\sigma(x)$ is in the group (resp.\ $\D\sigma(X)$ is in the algebra). As an example, let $G=SO(n)$ be the group of orthogonal matrices, and let $r=[-e_1, e_2, \ldots, e_n] = I - 2 e_1 e_1^T$, where 
$e_i$ denotes the $i$th canonical unit vector in $\RR^n$. Obviously, $r \not \in SO(n)$, as $\det r = -1$; nevertheless, we have $(rxr)^T(rxr) = r^T x^T r^T r x r = I$, as long as $x \in SO(n)$, since $r=r^T$ and $r^Tr=I$, thus $\sigma(x) = rxr \in SO(n)$.
It is straightforward to verify that the subgroup $G^\sigma$ of
Theorem~\ref{th:2.1} consists of all orthogonal $n\times n$
matrices of the form
\[ q = \left(\begin{array}{cc}
      1 & 0^T \\
      0 & q_{n-1}\end{array}\right),\]
    where $q_{n-1}\in SO(n-1)$.
    Thus the corresponding symmetric space is $G/G^\sigma =
      SO(n)/SO(n-1)$. Matrices belong to the same coset if their
      first column coincide, thus the symmetric space can be
      identified with the $(n-1)$-sphere $S^{n-1}$.

      The corresponding splitting of a skew-symmetric matrix $V\in \g = \so(n)$ is
      \[
      V = \left(\begin{array}{cc}
      0 & -v^T \\
      v & V_{n-1}\end{array}\right) =
    \left(\begin{array}{cc}
      0 & -v^T \\
      v & 0\end{array}\right) + \left(\begin{array}{cc}
      0 & 0 \\
      0 & V_{n-1}\end{array}\right) = P+K\in \GG{p} \oplus \GG{k}.
      \]
     Thus any orthogonal matrix can be expressed as the product of the
     exponential of a matrix in $\GG{p}$ and one in $\GG{k}$.
     The space $\GG{p}$ can be identified with the tangent space to
     the sphere in the point $(1,0,\ldots,0)^T$. Different choices of $r$ give different interesting algebra splittings and corresponding group factorizations. For instance, by choosing $r$ to be the anti-identity, $r = [e_n, e_{n-1}, \ldots, e_1]$, one obtains an algebra splitting in \emph{persymmetric} and \emph{perskew-symmetric} matrices. The choice $r=[-e_1, e_2, \ldots, e_n, -e_{n+1}, e_{n+2} \ldots, e_{2n}]$ for symplectic matrices, gives the splitting in lower-dimensional symplectic matrices forming a sub-algebra and a Lie-triple system, and so on.
      In~\cite{zanna01gpd,iserles05eco}, such splittings are used for the efficient approximation of the exponential of skew-symmetric, symplectic and zero-trace matrices. In~\cite{krogstad01alc} similar
     ideas are used to construct computationally effective numerical integrators for
     differential equations on Stiefel and Grassman manifolds.

\subsection{Generalized polar coordinates on Lie groups}
A similar framework can be used to obtain coordinates on Lie groups \cite{krogstad03gpc}, which are of interest when solving differential equations of the type 
\begin{displaymath}
	\dot x = X(t,x) x, \qquad x(0) = e,
\end{displaymath}
by reducing the problem recursively to spaces of smaller dimension. Recall that $x(t) = \exp(\Omega(t))$ where $\Omega$ obeys the differential equation $\dot{\Omega} = \dexp^{-1}_\Omega X$, where $\dexp^{-1}_A= \sum_{j=0}^\infty \frac{B_j}{j!} \ad_A^j$, $B_j$ being the $j$th Bernoulli number, see \cite{iserles00lgm}.

Decomposing $ X = \Pi_\sigma^{-}X + \Pi_\sigma^{+}X \in \mathfrak{p}\oplus \mathfrak{k}$, where $\Pi_\sigma^{\pm}X = \frac12 (X \pm \D \sigma (X))$,  the solution can be  factorized as
\begin{displaymath}
	x(t) = \exp(P(t)) \exp(K(t)),
\end{displaymath}
where
\begin{eqnarray}
	\dot P&=& \Pi^{-}X - [P, \Pi^{+}X] + \sum_{j=1}^\infty 2^{2j} c_{2j} \mathrm{ad}^{2j}_P  \Pi^{-}X, \quad c_{2j}=\frac{B_{2j}}{(2j)!},
	\label{eq:pdot} \\
	\dot K &=& \mathrm{dexp}^{-1}_K (\Pi^{+}X - 2 \sum_{j=1}^\infty (2^{2j}-1) c_{2j} \mathrm{ad}^{2j-1}_P \Pi^{-}X).	
	\label{eq:kdot}
\end{eqnarray}
Note that \refp{eq:kdot} depends on $P$, however, it is possible to formulate it solely in terms of $K$, $\Pi^{-}X$ and $\Pi^{+}X$, but the expression becomes less neat.
In block form:
\begin{displaymath}
	\left( \begin{array}{c} \dot P \\ \dot K \end{array}\right) = \left( \begin{array}{cc} I & 0  \\0 & \mathrm{dexp}^{-1}_{K} \end{array}\right) 
	\left( \begin{array}{cc} \frac{v\cosh v}{\sinh v} & -v\\ \frac{1-\cosh v}{\sinh v} &1 \end{array}\right) \left( \begin{array}{c} \Pi^{-}Z \\ \Pi^{+} Z \end{array}\right) 
\end{displaymath}
where $v = \mathrm{ad}_P$. The above formula paves the road for a recursive decomposition, by recognizing that $\mathrm{dexp}^{-1}_K $ is the $\mathrm{dexp}^{-1}$ function on the restricted sub-algebra $\mathfrak{k}$. By introducing a sequence of involutive automorphisms $\sigma_i$, one induces a sequence of subalgebras, $\mathfrak{g} =\mathfrak{g}_0 \supset \mathfrak{g}_1 \supset \mathfrak{g}_2 \supset \ldots$,  of decreasing dimention,  $\mathfrak{g}_{i+1} = \mathrm{Range} (\Pi_{\sigma_i}^+)$.   Note also that the functions appearing in the above formulation are all analytic functions of the $\mathrm{ad}$-operator, and are either odd or even functions, therefore they can be expressed as functions of $\mathrm{ad}^2$ on $\mathfrak{p}$. In particular, this means that, as long as we can compute analytic functions of the $\mathrm{ad}^2$ operator, the above decomposition is computable. 
  
      Thus, the problem is reduced to the computation of analytic functions of the 2-cyclic part $P$ as well as analytic functions of  $\ad_P$ (trivialized tangent maps and their inverse). The following theorem addresses the computation of such functions using the same framework of Lemma~\ref{th:proj}.
      
     \begin{theorem}[\cite{krogstad03gpc}]
     	Let $P$ be the 2-cyclic part of $X=P+(X-P)$ with respect to the involution $S$, i.e.\ $SPS=-P$. Let $\Theta = P^2\Pi^{-}_S$. For any analytic function $\psi(s)$, we have
	\begin{equation}
		\psi(P) = \psi(0) I + \psi_1(\Theta) P + P \psi_1(\Theta) + P\psi_2(\Theta) P + \psi_2(\Theta) \Theta,
	\end{equation}
	where $\psi_1(s) = \frac{1}{2\sqrt{s}}(\psi(\sqrt{s})-\psi(-\sqrt{s}))$  and $\psi_2(s) = \frac{1}{2s} (\psi(\sqrt{s}) + \psi(-\sqrt{s}) - 2\psi(0))$.
     \end{theorem}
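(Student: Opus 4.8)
The plan is to exploit the fact that a $2$-cyclic matrix $P$ has a very constrained power structure with respect to the splitting induced by $S$, and then to expand $\psi$ in a power series and collect terms according to parity. Write $\Pi^- = \Pi^-_S$ and $\Pi^+ = \Pi^+_S$, so that $I = \Pi^- + \Pi^+$, $\Pi^\pm \Pi^\mp = 0$, and, since $SPS = -P$, the matrix $P$ interchanges the two eigenspaces of $S$: concretely $P = \Pi^+ P \Pi^- + \Pi^- P \Pi^+$, i.e.\ $\Pi^\pm P \Pi^\pm = 0$. The first step is to establish, by induction, the normal forms for the powers of $P$. One checks that $P^2$ is block-diagonal, $P^2 = \Pi^- P^2 \Pi^- + \Pi^+ P^2 \Pi^+$, and hence $\Theta := P^2\Pi^-$ and $P^2\Pi^+ = P^2 - \Theta$ commute with each other and with any polynomial in $P^2$. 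From $\Pi^\pm P = P\Pi^\mp$ one derives the key commutation identity $P\,\Theta = P P^2 \Pi^- = P^2 (P\Pi^-) = P^2 \Pi^+ P$, and iterating, for every $k\ge 0$,
\begin{equation}
  P^{2k} = \Theta^k + (P^2-\Theta)^k = \Theta^k + \big(P^2\Pi^+\big)^k, \qquad
  P^{2k+1} = P\,\Theta^k + \Theta^k P - \text{(correction)} ,
\end{equation}
but it is cleaner to record instead the two identities $P^{2k} \Pi^- = \Theta^k$ (for $k\ge 1$) together with $P^{2k}\Pi^+ = (P^2-\Theta)^k$, and $P^{2k+1} = P\,\Theta^k = \Theta^k\, P'$ where the asymmetry is absorbed by writing odd powers as $P\cdot(\text{even polynomial in }\Theta)$ acting appropriately on each block. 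These are the routine inductive computations I would not grind through here, but they are the backbone of the argument.

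Next I would write $\psi(s) = \sum_{n\ge 0} a_n s^n$ and split the sum into even and odd parts, $\psi(P) = a_0 I + \sum_{k\ge 1} a_{2k} P^{2k} + \sum_{k\ge 0} a_{2k+1} P^{2k+1}$. Using $P^{2k} = P^{2k}\Pi^- + P^{2k}\Pi^+ = \Theta^k + (P^2-\Theta)^k$ for $k\ge 1$, and $P^{2k+1} = P\cdot P^{2k}$, the even part becomes $a_0 I + g(\Theta)\Pi^- + g(P^2-\Theta)\Pi^+$ where $g$ is the power series with $g(s) = \sum_{k\ge1} a_{2k}s^k = \tfrac12(\psi(\sqrt s)+\psi(-\sqrt s)) - \psi(0)$; one then checks that $g(\Theta)\Pi^- + g(P^2-\Theta)\Pi^+$ equals $P\psi_2(\Theta)P + \psi_2(\Theta)\Theta$ by matching $\psi_2(s) = g(s)/s$ against the two block pieces (here one uses $P\psi_2(\Theta)P = \psi_2(P^2\Pi^+)\,P^2\Pi^+ = (P^2-\Theta)\psi_2(P^2-\Theta)$ on the $+$ block, via the commutation identity above, and $\psi_2(\Theta)\Theta = g(\Theta)$ on the $-$ block). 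Similarly the odd part is $P\cdot h(P^2)$ with $h(s) = \sum_{k\ge0} a_{2k+1}s^k = \tfrac{1}{2\sqrt s}(\psi(\sqrt s)-\psi(-\sqrt s)) = \psi_1(s)$; decomposing $P\,h(P^2) = P\,h(P^2)\Pi^- + P\,h(P^2)\Pi^+$ and pushing the projector through $P$ via $P\Pi^\pm = \Pi^\mp P$ converts this into $\psi_1(\Theta)P + P\psi_1(\Theta)$ after again identifying $h(P^2)$ restricted to each block with $\psi_1$ evaluated at $\Theta$ or at $P^2-\Theta$. Summing the even and odd contributions gives exactly the claimed formula.

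The main obstacle I anticipate is purely bookkeeping: keeping straight on which eigenspace each occurrence of $\psi_1,\psi_2$ is being evaluated, since $\Theta = P^2\Pi^-$ lives on the $-$ block while $P^2-\Theta = P^2\Pi^+$ lives on the $+$ block, and the functional-calculus identities $\psi_j(\Theta)$ tacitly mean $\psi_j(\Theta)\Pi^- + \psi_j(0)\Pi^+$ or the analogous extension. The cross-terms $\psi_1(\Theta)P$ versus $P\psi_1(\Theta)$ are genuinely different operators (they act between different blocks), and the symmetric appearance of both in the statement is precisely what makes the bilinear pieces $\psi_1(\Theta)P + P\psi_1(\Theta)$ and $P\psi_2(\Theta)P + \psi_2(\Theta)\Theta$ block-symmetric; verifying that no term is double-counted and that the $k=0$ / $k=1$ edge cases (the isolated $\psi(0)I$) are handled correctly is where care is needed. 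Convergence is not an issue if one works formally with power series, or one invokes analyticity of $\psi$ on a disc containing the spectrum of $P$ and notes that the spectrum of $\Theta$ is contained in that of $P^2$, so all the series in $\Theta$ converge as well.
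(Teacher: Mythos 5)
Your overall strategy --- expand $\psi$ in a power series, split into even and odd powers, and reduce everything to the block structure forced by $\Pi^{\pm}_S P = P\,\Pi^{\mp}_S$ --- is the right one (note the paper gives no proof of this theorem, merely quoting it from the cited reference, so there is nothing internal to compare against). But there is a genuine gap at exactly the spot you flag as ``where care is needed'': the $k=0$ terms do not convert into the bilinear expressions you claim. Under your own stated convention that $\psi_j(\Theta)$ is the ordinary functional calculus, i.e.\ $\psi_j(\Theta)=\psi_j(\Theta)\Pi^-_S+\psi_j(0)\Pi^+_S$ (which is the reading consistent with the displayed block formula $\psi_i(\Theta)=\mathrm{diag}(\psi_i(B^TA),\psi_i(0)I)$ in the text), the conversion of the odd part is off: since $\Theta^0=I\neq\Pi^-_S$, one actually gets
\begin{displaymath}
  P\,\psi_1(P^2)\;=\;\psi_1(\Theta)P+P\psi_1(\Theta)-\psi_1(0)P,
\end{displaymath}
not $\psi_1(\Theta)P+P\psi_1(\Theta)$. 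Likewise your asserted identity $P\psi_2(\Theta)P=\psi_2(P^2\Pi^+_S)\,P^2\Pi^+_S$ fails in its $k=0$ term, because $P\,\Theta^0\,P=P^2=P^2\Pi^-_S+P^2\Pi^+_S$ rather than $P^2\Pi^+_S$; carried through, the even part equals $P\psi_2(\Theta)P+\psi_2(\Theta)\Theta-\psi_2(0)\Theta$. The quickest sanity check is $\psi(s)=s$: then $\psi_1\equiv 1$, $\psi_2\equiv 0$, $\psi(0)=0$, and your final matching returns $2P$ instead of $P$. (Your displayed ``identity'' $P^{2k+1}=P\Theta^k$ is only the $\Pi^-_S$-half of $P^{2k+1}$; the hand-waving about ``absorbing the asymmetry'' is exactly where the missing $\psi_1(0)P$ hides.)

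A correct execution of your own plan therefore yields
\begin{displaymath}
  \psi(P)=\psi(0)I+\psi_1(\Theta)P+P\psi_1(\Theta)-\psi_1(0)P+P\psi_2(\Theta)P+\psi_2(\Theta)\Theta-\psi_2(0)\Theta,
\end{displaymath}
which coincides with the stated formula only under the alternative convention that $\psi_j(\Theta)$ is taken to vanish on the range of $\Pi^+_S$ (i.e.\ $\psi_j(\Theta)$ is replaced by $\psi_j(\Theta)\Pi^-_S$). You cannot have it both ways: either keep the functional-calculus reading and add the two correction terms $-\psi_1(0)P-\psi_2(0)\Theta$, or adopt the restricted convention and abandon the ``tacit'' reading you state. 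As written, your proof silently drops $\psi_1(0)P+\psi_2(0)\Theta$ in the final matching step, so it does not establish the identity in the form it claims to.
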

A similar result holds for $\ad_P$, see \cite{krogstad03gpc}. It is interesting to remark that if 
\begin{displaymath}
	P = \left( \begin{array}{cc} 0 & B^T \\A & 0
	\end{array} 
	\right), 
\end{displaymath}
then
\begin{displaymath}
	\Theta =P^2\Pi_S^{-} = \left( \begin{array}{cc}  B^T A & 0 \\ 0 &0
	\end{array} 
	\right), \qquad \psi_i(\Theta) = \left( \begin{array}{cc}  \psi_i(B^T A) & 0 \\ 0 &\psi_i(0)I
	\end{array} 
	\right),
\end{displaymath}
where $\psi_1, \psi_2$ as above. In particular, the problem is reduced to computing analytic functions of the principal square root of a matrix \cite{higham2008functions}: numerical methods that compute these quantities accurately and efficiently are very important for competitive numerical algorithms. Typically, $\Theta$ is a low-rank matrix, hence computations can be done  using eigenvalues and eigenfunctions  of the $\mathrm{ad}$ operator restricted to the appropriate space, see \cite{krogstad01alc,krogstad03gpc}.
In particular, if $A,B$ are vectors, then $B^T A$ is a scalar and the formulas become particularly simple.

These coordinates have interesting applications in control theory. Some early use of these generalized Cartan decompositions \refp{eq:2.2}--\refp{eq:2.3} (which the author calls $\mathbb{Z}_2$-\emph{grading}) to problems with nonholonomic constraints can be found in \cite{brockett99esc}. In \cite{khaneja2001toc}, the authors embrace the formalism of symmetric spaces and use orthogonal (Cartan) decompositions with applications to NMR spectroscopy and quantum computing, using adjoint orbits as main tool. Generally, these decompositions can be found in the literature, but have been applied mostly to the cases when $[\mathfrak{k}, \mathfrak{k} ]=\{0\} $ or $[\mathfrak{p}, \mathfrak{p} ]=\{0\} $, or both, as \refp{eq:pdot}-\refp{eq:kdot} become very simple and the infinite sums reduce to one or two terms.  
The main contribution of \cite{krogstad03gpc} is the derivation of such differential equations \emph{and} the evidence that such equations can be solved efficiently using linear algebra tools. See also \cite{zanna11gpd} for some applications to control theory. 

\subsection{Symmetries and reversing symmetries of differential
  equations} 
\label{sec:3.3}
Let $\CC{Diff}(M)$ be the group of diffeomorphisms of a manifold $M$ 
onto itself. We say that a map $\varphi \in \CC{Diff}(M)$ has a symmetry 
${\cal S}:M \to M$ if  
\begin{displaymath}
    {\cal S} \varphi {\cal S}^{-1} = \varphi
\end{displaymath}
(the multiplication indicating the usual composition of maps, i.e.\ 
$\varphi_{1} \varphi_{2} = \varphi_{1} \circ \varphi_{2}$), while if
\begin{displaymath}
    {\cal R} \varphi {\cal R}^{-1} = \varphi^{-1},
\end{displaymath}
we say that ${\cal R}$ is a reversing symmetry of $\varphi$ 
\cite{mclachlan98nit}. Without further ado, we restrict ourselves to involutory symmetries, the main subject of this paper.
Symmetries and reversing symmetries are very important 
in the context of dynamical systems and their numerical integration.
For instance, nongeneric bifurcations can become generic in the
presence of symmetries and vice versa. Thus, when using the integration
time-step as a bifurcation parameter, it is vitally important to remain
within the smallest possible class of systems. Reversing
symmetries, on the other hand, give rise to the existence of invariant tori and
invariant cylinders \cite{moser73sar,roberts92cat,sevryuk86rs,stuart96dsa}.

It is a classical result that the set of symmetries possess the
structure of a group --- they behave like automorphisms and fixed sets
of automorphisms. The group structure, however, does not extend to
reversing symmetries and fixed points of anti-automorphisms, and in the
last few years the set of reversing symmetries has received the attention of
numerous numerical analysts. In \cite{mclachlan98nit} it was observed that
the set of fixed points of an involutive anti-automorphism ${\cal A}_{-}$ was closed under the operation
\begin{displaymath}
  \varphi_1 \cdot \varphi_2 = \varphi_1 \varphi_2 \varphi_1 \in \fix
  {\cal A}_{-}, \quad \forall \varphi_1, \varphi_2 \in \fix {\cal A}_{-},
\end{displaymath}
that McLachlan et al.\ called ``sandwich product''.\footnote{The authors called the set of vector fields closed under the sandwich product $ \varphi_1 \cdot \varphi_2 = \varphi_1 \varphi_2 \varphi_1$ a {\em pseudogroup}.}  
Indeed, our initial goal was to understand such structures and investigate how they could be used to devise new numerical integrators for differential equations with some special geometric properties.
We recognise, cfr.\ \S\ref{sec:2.1}, that the set of fixed points of an
anti-automorphism is a symmetric space. Conversely, any connected space of invertible elements closed under the ``sandwich product'',  is the set of the fixed points of an involutive automorphism (cfr.~Theorem~\ref{th:2.1}) and has associated to it a LTS.
To show this, consider the well known symmetric BCH formula,
\begin{equation}
\begin{meqn}
	\exp(Z)&=&  \exp( X) \exp(Y) \exp(X)\\
	Z& =& 2X + Y +\frac16 T_Y(X)  -\frac16 T_X(Y) + \frac{7}{360} T_X^2(Y) -\frac{1}{360} T_Y^2 X + \cdots
	\end{meqn}
\label{eq:bch}
\end{equation}
\cite{sanz-serna94nhp}, which is used extensively in the context of \emph{splitting methods} \cite{MR2009376}. 
Because of the sandwich-type composition (symmetric space structure), the corresponding $Z$ must be in the LTS space, and this explains why it can be written as powers of the double commutator operators $T_X=\ad_X^2, T_Y=\ad_Y^2$ applied to $X$ and $Y$. A natural question to ask is: what is the automorphism $\sigma$ having such sandwich-type composition as anti-fixed points?
As $\fix \mathcal{A}_{-}=\{ z | \sigma(z)= z^{-1}\}$, we see that, by writing $z = \exp{Z}$, we have $\sigma(\exp(Z)) = (\exp(Z))^{-1} = \exp(-Z)$. In the context of numerical integrators, the automorphism $\sigma$ consists in changing the time $t$ to $-t$. This will be proved in \S\ref{sec:3.4}.

If $M$ is a finite dimensional smooth compact manifold, it is well
known that the infinite dimensional group of $\CC{Diff}(M)$ of all
smooth diffeomorphisms $M \to M$ is a Lie group, with Lie algebra
$\CC{Vect}(M)$ of all smooth vector fields on $M$, with the usual
bracket and exponential map.  It should be noted, however, that the
exponential map is not a one-to-one map, not even in the neighbourhood the 
identity element, since there exist diffeomorphisms arbitrary close 
to the identity which are not on any one-parameter subgroup and others 
which are on  many. However, the 
regions where the exponential map is not surjective become smaller and 
smaller the closer we approach the identity \cite{pressley88lg,omori70otg}, and, for our purpose, we 
can disregard these regions and assume that our results are formally 
true.

There are two different settings that we can consider in this
context. The first is to analyze the set of differentiable maps that possess a
certain symmetry (or a discrete set of symmetries). The second is
to consider the structure of the set of symmetries of a fixed
diffeomorphism. The first has a continuous-type structure while the
second is more often a discrete type symmetric space.

\begin{proposition}\label{th:3.4}
  The set of diffeomorphisms $\varphi$ that possess 
  ${\cal R}$ as an (involutive) reversing symmetry is a symmetric
  space of the type $G_\sigma$. 
\end{proposition}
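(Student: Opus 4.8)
The plan is to exhibit an involutive automorphism $\sigma$ of the (formal) Lie group $G=\CC{Diff}(M)$ whose space of symmetric elements $G_\sigma=\{\varphi\in G:\sigma(\varphi)=\varphi^{-1}\}$ is precisely the set in the statement, and then invoke Theorem~\ref{th:2.1} (equivalently, example~\ref{item:3} of \S\ref{sec:2.1}). The natural candidate is conjugation by the reversing symmetry,
\begin{displaymath}
  \sigma(\varphi)={\cal R}\,\varphi\,{\cal R}^{-1},\qquad \varphi\in\CC{Diff}(M).
\end{displaymath}
With this $\sigma$ the equation $\sigma(\varphi)=\varphi^{-1}$ reads word for word as the condition ${\cal R}\varphi{\cal R}^{-1}=\varphi^{-1}$ that ${\cal R}$ be a reversing symmetry of $\varphi$, so the set under consideration \emph{is} $G_\sigma$ (its fixed subgroup $G^\sigma$ being the centralizer of ${\cal R}$, i.e.\ the diffeomorphisms for which ${\cal R}$ is a genuine, non-reversing symmetry); what remains is to check that $\sigma$ qualifies as the kind of automorphism the general theory asks for.

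First I would verify that $\sigma$ is a group automorphism of $\CC{Diff}(M)$: conjugation is multiplicative, $\sigma(\varphi_1\varphi_2)=\sigma(\varphi_1)\sigma(\varphi_2)$, and bijective, its inverse being conjugation by ${\cal R}^{-1}={\cal R}$. Next, that $\sigma$ is involutive: since ${\cal R}$ is involutory, ${\cal R}^2=\id$, and hence
\begin{displaymath}
  \sigma^2(\varphi)={\cal R}\big({\cal R}\varphi{\cal R}^{-1}\big){\cal R}^{-1}={\cal R}^2\,\varphi\,{\cal R}^{-2}=\varphi ,
\end{displaymath}
so $\sigma^2=\id$; this is exactly the point at which the hypothesis that the reversing symmetry be \emph{involutive} enters. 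Finally $\sigma\neq\id$, since a nontrivial involution ${\cal R}$ is not central in $\CC{Diff}(M)$. Theorem~\ref{th:2.1} then yields that $G_\sigma$ is a symmetric space, of type $G_\sigma$ by construction, with symmetric product $\varphi_1\cdot\varphi_2=\varphi_1\varphi_2^{-1}\varphi_1$; closure under this product can also be seen at once from $\sigma(\varphi_1\varphi_2^{-1}\varphi_1)=\varphi_1^{-1}\varphi_2\varphi_1^{-1}=(\varphi_1\varphi_2^{-1}\varphi_1)^{-1}$, and $G_\sigma$ is nonempty since both $\id$ and ${\cal R}$ lie in it. Equivalently, one may present $G_\sigma$ as $\fix{\cal A}_{-}$ for the involutive anti-automorphism ${\cal A}_{-}(\varphi)=\sigma(\varphi)^{-1}={\cal R}\varphi^{-1}{\cal R}^{-1}$, which reconnects the statement with the ``sandwich product'' picture of \cite{mclachlan98nit} recalled above.

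The main obstacle is structural rather than algebraic. Since $\CC{Diff}(M)$ is an infinite-dimensional Lie group, Theorem~\ref{th:2.1}, stated there for finite-dimensional connected Lie groups, does not apply literally; one argues formally, disregarding --- as already flagged in the paragraph preceding the proposition --- the regions (shrinking as one approaches $\id$) where $\exp$ is not well behaved. In the same spirit one should indicate why $G_\sigma$ carries a manifold structure: near any of its points it is the solution set of the equation $\sigma(\varphi)=\varphi^{-1}$, whose linearization at $\id$ is $\D\sigma(X)=-X$, so that $G_\sigma$ is modelled on the $(-1)$-eigenspace $\GG{p}$ of $\D\sigma$ acting on $\CC{Vect}(M)$, in accordance with the splitting \refp{eq:2.2}. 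Condition (iv) of Definition~\ref{def:2.1} is then read off from this submanifold description, or simply dropped in accordance with the remark following that definition if one prefers to regard $G_\sigma$ with its formal subspace topology.
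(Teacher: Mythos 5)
Your proof is correct and follows essentially the same route as the paper: define $\sigma(\varphi)={\cal R}\varphi{\cal R}^{-1}$, check it is an involutive automorphism of $\CC{Diff}(M)$, observe that the set in question is exactly the space of symmetric elements $G_\sigma$, and invoke Theorem~\ref{th:2.1}. Your additional remarks (direct verification of closure under $\varphi_1\varphi_2^{-1}\varphi_1$, the caveat about the infinite-dimensional setting, and the identification of $\GG{p}$ as the model space) go beyond what the paper records but are consistent with it.
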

\begin{proof} 
  Denote 
  \begin{displaymath}
    \sigma(\varphi) = {\cal R} \varphi {\cal R}^{-1}.
  \end{displaymath}
  It is clear that $\sigma$ acts as an automorphism,
  \begin{displaymath}
    \sigma(\varphi_{1}\varphi_{2}) = \sigma(\varphi_{1})
    \sigma(\varphi_{2}),
  \end{displaymath}
  moreover, if ${\cal R}$ is an involution then so is also $\sigma$. 
  Note that the set of diffeomorphisms $\varphi$ that possess 
  ${\cal R}$ as a reversing symmetry is the space of symmetric 
  elements $G_{\sigma}$ defined by the automorphism $\sigma$ (cfr.\ 
  \S\ref{sec:2}). Hence the result follows from 
  Theorem~\ref{th:2.1}.
\end{proof}

\begin{proposition}\label{th:3.4b}
  The set of reversing symmetries acting on a diffeomorphism $\varphi$
  is a symmetric space with the composition ${\cal R}_1 \cdot {\cal R}_2 =
  {\cal R}_1 {\cal R}_2^{-1} {\cal R}_1$.
\end{proposition}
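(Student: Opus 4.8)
The plan is to verify the three (or four) axioms of Definition~\ref{def:2.1} directly for the proposed composition ${\cal R}_1 \cdot {\cal R}_2 = {\cal R}_1 {\cal R}_2^{-1} {\cal R}_1$, exploiting the characterization of ``reversing symmetry of $\varphi$'' as an algebraic condition. First I would fix the diffeomorphism $\varphi$ and let $M_\varphi = \{ {\cal R} \in \CC{Diff}(M) : {\cal R}\varphi{\cal R}^{-1} = \varphi^{-1} \}$ be the set in question; equivalently ${\cal R}\varphi = \varphi^{-1}{\cal R}$. The key observation — which drives everything — is that this is a coset-like condition: if ${\cal R}_1, {\cal R}_2 \in M_\varphi$, then ${\cal R}_1^{-1}{\cal R}_2$ conjugates $\varphi$ to itself (i.e.\ lies in the symmetry group of $\varphi$), and symmetrically ${\cal R}_2{\cal R}_1^{-1}$ does too. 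I would then just compute: $({\cal R}_1{\cal R}_2^{-1}{\cal R}_1)\varphi({\cal R}_1{\cal R}_2^{-1}{\cal R}_1)^{-1}$, pushing $\varphi$ leftward through each factor using the defining relation (each ${\cal R}_i$ inverts $\varphi$, each ${\cal R}_i^{-1}$ also inverts $\varphi$ since the relation is equivalent to ${\cal R}^{-1}\varphi{\cal R} = \varphi^{-1}$), so three inversions compose to a single inversion, giving $\varphi^{-1}$. Hence $M_\varphi$ is closed under $\cdot$.

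Next I would check the symmetric-space axioms. Axiom (i), ${\cal R}\cdot{\cal R} = {\cal R}{\cal R}^{-1}{\cal R} = {\cal R}$, is immediate. Axiom (ii), ${\cal R}_1 \cdot ({\cal R}_1 \cdot {\cal R}_2) = {\cal R}_1 ({\cal R}_1 {\cal R}_2^{-1} {\cal R}_1)^{-1} {\cal R}_1 = {\cal R}_1 {\cal R}_1^{-1} {\cal R}_2 {\cal R}_1^{-1} {\cal R}_1 = {\cal R}_2$, is a one-line cancellation. Axiom (iii) is the only computational one: expand both ${\cal R}_1 \cdot ({\cal R}_2 \cdot {\cal R}_3)$ and $({\cal R}_1 \cdot {\cal R}_2)\cdot({\cal R}_1 \cdot {\cal R}_3)$ as words in the ${\cal R}_i$ and their inverses and observe they reduce to the same word; this is the standard identity that makes any group a symmetric space under $x\cdot y = xy^{-1}x$, and it holds verbatim here because the computation only uses associativity and inverses in the ambient group $\CC{Diff}(M)$, restricted to the subset $M_\varphi$ (closure having already been established). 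For axiom (iv) I would invoke the same topological caveat already made in the text after Definition~\ref{def:2.1}: either one works in the discrete-topology setting (where (i)--(iii) suffice), or, if $M_\varphi$ inherits a manifold structure as a subset of $\CC{Diff}(M)$, one notes that ${\cal R}_1\cdot{\cal R}_2 = {\cal R}_2 \iff {\cal R}_2^{-1}{\cal R}_1 = {\cal R}_1^{-1}{\cal R}_2$, i.e.\ $({\cal R}_1^{-1}{\cal R}_2)^2 = \id$, and fixed sets of the involution ${\cal R}_2 \mapsto {\cal R}_1{\cal R}_2^{-1}{\cal R}_1$ are isolated in a neighbourhood of ${\cal R}_1$.

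Alternatively — and more in the spirit of the paper — I would give a one-line proof by reduction to Theorem~\ref{th:2.1}: observe that $M_\varphi$, if nonempty, is precisely a left translate of the symmetry group $H_\varphi = \{ {\cal S} : {\cal S}\varphi{\cal S}^{-1} = \varphi\}$, namely $M_\varphi = {\cal R}_0 H_\varphi$ for any fixed ${\cal R}_0 \in M_\varphi$, and conjugation $g \mapsto {\cal R}_0 g {\cal R}_0^{-1}$ identifies $M_\varphi$ with the symmetric space $G_\sigma$ of Proposition~\ref{th:3.4} for the automorphism $\sigma(\varphi) = {\cal R}_0 \varphi {\cal R}_0^{-1}$ (note $\sigma$ is involutive since ${\cal R}_0$ is); one checks this identification carries the sandwich-type product $x y^{-1} x$ on $G_\sigma$ to the stated product on $M_\varphi$. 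The main (minor) obstacle is bookkeeping: making sure the ``reversing'' relation is used in its two equivalent forms (${\cal R}\varphi{\cal R}^{-1}=\varphi^{-1}$ and ${\cal R}^{-1}\varphi{\cal R}=\varphi^{-1}$) consistently when pushing $\varphi$ through an odd-length product, and handling the possibility that $M_\varphi$ is empty or disconnected (in which case the statement should be read as applying to each connected component, exactly as for the symmetric positive definite matrices in \S\ref{sec:3.1}). No genuine difficulty is expected beyond this.
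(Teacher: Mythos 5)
Your proof is correct and takes essentially the same route as the paper's: closure is established by conjugating $\varphi$ through the odd-length word ${\cal R}_1{\cal R}_2^{-1}{\cal R}_1$ (using that each ${\cal R}_i$ and ${\cal R}_i^{-1}$ inverts $\varphi$), and axioms (i)--(iii) are then checked directly, with the discrete-set caveat for (iv) --- exactly the structure of the paper's argument, which you in fact carry out in more detail than the paper does. Your alternative reduction to Theorem~\ref{th:2.1} via the coset identification $M_\varphi = {\cal R}_0 H_\varphi$ is a pleasant extra, but the primary argument coincides with the published one.
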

\begin{proof}
  If $\mathcal{R}_1$ is a symmetry of $\varphi$ then so is also ${\cal
  R}^{-1}$, since $\mathcal{R}_1^{-1} \varphi^{-1} \mathcal{R}_1 = \varphi$
  and the assertion follows by taking the inverse on both sides of the
  equality. In particular, if $\mathcal{R}_1$ is a symmetry of $\varphi$ it
  is also true that $\mathcal{R}_1^{-1}$ is a reversing symmetry of $\varphi^{-1}$.
  Next, we observe that if $\mathcal{R}_1$ and $\mathcal{R}_2$ are two
  reversing symmetries of $\varphi$ then so is also $\mathcal{R}_1 {\cal
    S}^{-1} \mathcal{R}_1$, since
  \begin{displaymath}
    \mathcal{R}_1 \mathcal{R}_2^{-1} \mathcal{R}_1 \varphi (\mathcal{R}_1 \mathcal{R}_2^{-1}
    \mathcal{R}_1)^{-1} = \mathcal{R}_1 \mathcal{R}_2^{-1} \varphi^{-1} \mathcal{R}_2
    \mathcal{R}_1^{-1} = \mathcal{R}_1 \varphi \mathcal{R}_1^{-1} = \varphi^{-1}.
  \end{displaymath}
  It follows that the composition $\mathcal{R}_1 \cdot \mathcal{R}_2 =
  \mathcal{R}_1 \mathcal{R}_2^{-1} \mathcal{R}_1$ is an internal operation on the
  set of reversing symmetries of a diffeomorphism $\varphi$.

  With the above multiplication, the conditions i)--iii) of
  Definition~\ref{def:2.1} are easily verified. This proves the assertion
  in the case when $\phi$ has a discrete set of reversing
  symmetries. 

\end{proof}

 In what follows, we assume that $\mathcal{T}\in \CC{Diff}(M)$ is
 differentiable and involutory ($\mathcal{T}^{-1}=\mathcal{T}$) and $\sigma(\varphi) = \mathcal{T} \varphi \mathcal{T}$. 

Acting on  $\varphi = \exp(tX)$, we have
\begin{displaymath}
    \D\sigma X = \frac{\D}{\D t}\Big|_{t=0} \sigma \exp(tX) = {\cal
    T}_{*} X \mathcal{T}, 
\end{displaymath}
where $\mathcal{T}_{*}$ is the tangent map of $\mathcal{T}$. The pullback is
natural with respect to the Jacobi bracket,
\begin{displaymath}
  [\mathcal{T}_* X \mathcal{T}, \mathcal{T}_* Y\mathcal{T} ] = \mathcal{T}_*
  [X, Y] \mathcal{T},
\end{displaymath}
for all vector fields $X,Y$. Hence the map $\D\sigma$ is an
involutory algebra automorphism. Let $\GG{k}_\sigma$ and
$\GG{p}$ be the eigenspaces of $\D\sigma$ in $\ghe=
\CC{diff}(M)$. Then 
\begin{displaymath}
    \ghe = \GG{k} \oplus \GG{p},
\end{displaymath}
where 
\begin{displaymath}
    \GG{k} = \{ X : \mathcal{T}_{*} X = X \mathcal{T} \}
\end{displaymath}
is the Lie algebra of vector fields that have $\mathcal{T}$ as a 
symmetry and 
\begin{displaymath}
    \GG{p} = \{ X : \mathcal{T}_{*} X = - X \mathcal{T} \}
\end{displaymath} 
is the Lie triple system, vector fields corresponding to maps 
that have $\mathcal{T}$ as a reversing symmetry. 
Thus, as is the case for matrices, every vector field $X$ can be split
into two parts, 
\begin{displaymath}
  X = \frac12\Big(X+\D\sigma(X)\Big) + \frac12\Big(X-\D\sigma(X)\Big) = 
  \frac12\Big(X+ \mathcal{T}_* X \mathcal{T}\Big) + \frac12\Big(X - \mathcal{T}_* X
  \mathcal{T}\Big),
\end{displaymath}
having $\mathcal{T}$ as a symmetry and reversing symmetry respectively.

In the context of ordinary differential equation, let us consider
\begin{equation}
\label{eq:3.3}
    \frac{\D y}{\D t} = F(y), \qquad y \in \RR^{N}.
\end{equation}
Given an arbitrary involutive function $\mathcal{T}$, the vector field
$F$ can always be canonically split into two components, having ${\cal
  R}$ as a symmetry and reversing symmetry respectively. 
However, if one of these components equals zero, then the system
\refp{eq:3.3} has $\mathcal{T}$ as a symmetry or a reversing symmetry.

\subsection{Selfadjoint numerical schemes as a symmetric space}
\label{sec:3.4}
Let us consider the ODE \refp{eq:3.3}, whose exact flow will be denoted
as $\varphi = \exp(tF)$. Backward error analysis for ODEs implies that
a (consistent) numerical method for the integration of \refp{eq:3.3}  can
be interpreted as the sampling at $t=h$ of the flow $\varphi_h(t)$ of a
vector field $F_h$ (the so called \emph{modified vector field}) which is close to $F$, 
\begin{displaymath}
  \varphi_h(t) = \exp(t F_h), \quad F_h = F + h^p E_p + h^{p+1} E_{p+1} + \cdots,
\end{displaymath}
where $p$ is the order of the method (note that setting $t=h$, the
local truncation error is of order $h^{p+1}$).  

Consider next the map $\sigma$ on the set of flows depending on the
parameter $h$ defined as
\begin{equation}
  \label{eq:3.4}
  \sigma(\varphi_h(t)) = \varphi_{-h}(-t),
\end{equation}
where $\varphi_{-h}(t) = \exp(t F_{-h})$, with $F_{-h} = F +(- h)^p E_p +
(-h)^{p+1} E_{p+1} + \cdots$. 

The map $\sigma$ is involutive, since $\sigma^2 = \id$, and it is
easily verified by means of the BCH formula that $\sigma(\varphi_{1,h}
\varphi_{2,h}) = \sigma(\varphi_{1,h}) \sigma(\varphi_{2,h})$, hence
$\sigma$ is an automorphism. Consider next
\begin{displaymath}
  G_\sigma = \{ \varphi_h : \sigma(\varphi_h) = \varphi_h^{-1} \}.
\end{displaymath}
Then $\varphi_h \in G_\sigma$ if and only if $\varphi_{-h}(-t) =
\varphi_h^{-1}(t)$, namely the method $\varphi_h$ is {\em selfadjoint}.
\begin{proposition}
	The set of one-parameter, consistent, selfadjoint numerical schemes is a symmetric space in the sense of Theorem~\ref{th:2.1}, generated by $\sigma$ as in \refp{eq:3.4}.
\end{proposition}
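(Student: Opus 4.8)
The plan is to verify that the map $\sigma$ defined in \refp{eq:3.4} satisfies the hypotheses of Theorem~\ref{th:2.1} on the (formal) Lie group of one-parameter families of consistent numerical schemes, so that the set $G_\sigma$ of selfadjoint methods is automatically a symmetric space. First I would make explicit the ambient group: a one-parameter consistent numerical scheme is identified, via backward error analysis, with a flow $\varphi_h(t)=\exp(tF_h)$ with $F_h=F+h^pE_p+h^{p+1}E_{p+1}+\cdots$, and composition is the usual composition of diffeomorphisms, with identity the trivial flow and inverse $\varphi_h(t)^{-1}=\varphi_h(-t)$. This is a formal group (a subgroup of $\CC{Diff}(M)$ depending on the parameter $h$), and the remarks just before Proposition~\ref{th:3.4} about the exponential map and surjectivity apply verbatim, so we may reason formally.

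Next I would check the three required properties of $\sigma$. \emph{(a)} $\sigma$ is well defined: applying $t\mapsto -t$ and $h\mapsto -h$ to $\varphi_h(t)=\exp(tF_h)$ gives $\varphi_{-h}(-t)=\exp(-tF_{-h})$, which is again of the form $\exp(tG_h)$ with $G_h=-F_{-h}=F+h^p(-1)^{p+1}E_p+\cdots$, still a consistent modified vector field, so $\sigma$ maps the set of schemes into itself. \emph{(b)} $\sigma$ is an involutive automorphism: $\sigma^2$ undoes both sign changes, so $\sigma^2=\id$, and the homomorphism property $\sigma(\varphi_{1,h}\varphi_{2,h})=\sigma(\varphi_{1,h})\sigma(\varphi_{2,h})$ follows because both composition and the substitutions $t\mapsto -t$, $h\mapsto -h$ are compatible — concretely, using the BCH formula one writes the composite flow as a single exponential whose generator is a series of iterated brackets in $F_{1,h}$ and $F_{2,h}$, and one observes that each bracket term picks up exactly the sign needed so that the substitution distributes over the product; this is the computation already invoked in the paragraph preceding the statement. \emph{(c)} Identify $G_\sigma$: by definition $\varphi_h\in G_\sigma$ iff $\sigma(\varphi_h)=\varphi_h^{-1}$, i.e.\ $\varphi_{-h}(-t)=\varphi_h(-t)$ for all $t$, equivalently $\varphi_{-h}=\varphi_h$, which is precisely the classical definition of a selfadjoint (time-symmetric) method. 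Hence $G_\sigma$ is the set of one-parameter consistent selfadjoint schemes.

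Finally I would invoke Theorem~\ref{th:2.1}: since $\sigma$ is an involutive (analytic) automorphism of the group of schemes with $\sigma\neq\id$, the set $G_\sigma=\{\varphi_h:\sigma(\varphi_h)=\varphi_h^{-1}\}$ of symmetric elements is a symmetric space under the symmetric product $\varphi_h\cdot\psi_h=\varphi_h\psi_h^{-1}\varphi_h$, and it is isomorphic to the homogeneous space $G/G^\sigma$, where $G^\sigma=\fix\sigma$ consists of the schemes satisfying $\varphi_{-h}(-t)=\varphi_h(t)$ (the ``symmetric-conjugation-invariant'' methods). One should also note that condition (iv) of Definition~\ref{def:2.1} is the only delicate point for a manifold with open-set topology, but in this formal-group setting — as with the $\CC{Diff}(M)$ discussion earlier — it is handled exactly as the non-surjectivity of $\exp$ is handled there, namely by restricting to a neighbourhood of the identity where everything is well behaved; alternatively, if one prefers to treat the set of schemes with the discrete topology, (i)--(iii) alone already suffice by the remark following Definition~\ref{def:2.1}.

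\textbf{Main obstacle.} The one genuinely non-routine step is the homomorphism property (b): one must verify that composing two modified flows and then applying the substitution $(t,h)\mapsto(-t,-h)$ yields the same result as applying the substitution first and then composing. The cleanest route is the symmetric/non-symmetric BCH expansion, tracking how each iterated-bracket term transforms; the point is that a bracket of total degree $m$ in the vector fields, multiplied by the appropriate power of $t$, transforms consistently under the simultaneous sign flip, so the two orders of operations agree term by term. Everything else — well-definedness, involutivity, the identification of $G_\sigma$ with selfadjoint methods, and the appeal to Theorem~\ref{th:2.1} — is essentially bookkeeping.
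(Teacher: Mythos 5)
Your argument is correct and follows essentially the same route as the paper, which establishes the proposition by exactly this chain: $\sigma$ as in \refp{eq:3.4} is involutive, is an automorphism (checked via the BCH formula), and its set of symmetric elements $G_\sigma$ is precisely the set of selfadjoint schemes, so Theorem~\ref{th:2.1} applies. One small slip in your step (a): the generator of $\sigma(\varphi_h)$ is $-F_{-h}=-F+(-1)^{p+1}h^pE_p+\cdots$, not $F+\cdots$, so $\sigma$ sends a consistent scheme to a time-reversed one rather than preserving consistency; this is harmless, since what matters is that $\sigma$ is an automorphism of the ambient group of flows and that the symmetric product $\varphi\,\psi^{-1}\varphi$ does preserve consistency.
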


Next, we perform the decomposition \refp{eq:2.2}. We deduce from
\refp{eq:3.4} that
\begin{displaymath}
  \D\sigma (F_h) = \frac{\D}{\D t} \Big|_{t=0} \sigma(\exp(tF_h)) = -(F
  +(-h)^p E_p + (-h)^{p+1} E_{p+1}) + \cdots = -F_{-h},
\end{displaymath}
hence,
\begin{displaymath}
  \GG{k} = \{ F_h : \D\sigma(F_h) = F_h \} = \{ F_h : - F_{-h} =
  F_h \}, 
\end{displaymath}
is the subalgebra of vector fields that are odd in $h$, and 
\begin{displaymath}
  \GG{p} = \{ F_h : \D\sigma(F_h) = - F_h \} = \{ F_h : F_{-h} =
  F_h \}, 
\end{displaymath}
is the LTS of vector fields that possess only even powers of $h$.
Thus, if $F_h$ is the modified vector field of a numerical integrator
$\varphi_h$, its canonical decomposition in $\GG{k} \oplus
\GG{p}$ is
\begin{eqnarray*}
  F_h &=& \frac12 (F_h + \D\sigma(F_h)) + \frac12(F_h - \D\sigma(F_h))
  \\
  &= & \frac12(F + \sum_{k=p}^\infty (1-(-1)^k) h^k E_k) + \frac12(F +
  \sum_{k=p}^\infty (1+(-1)^k) h^k E_k),
\end{eqnarray*}
the first term containing only odd powers of $h$ and the second only
even powers.
Then, if the numerical method $\varphi_h(h)$ is selfadjoint, it
contains only odd powers of $h$ locally (in perfect agreement with
classical results on selfadjoint methods \cite{hairer87sod}). 

\subsection{Connections with the generalized Scovel projection for differential equations with reversing symmetries}
\label{sec:3.5}
In \cite{munthe-kaas01gpd} it has been shown that it is
possible to generalize the polar decomposition of matrices to Lie
groups endowed with an involutive automorphism: 
every Lie group element $z$ sufficiently close to the identity can be
decomposed as $z=xy$ where $x \in G_\sigma$, the space of symmetric
elements of $\sigma$, and $y\in G^\sigma$, the subgroup of $G$ of
elements fixed under $\sigma$. Furthermore, setting $z = \exp(tZ)$ and
$y = \exp(Y(t))$, one has that $Y(t)$ is an odd function of $t$ and it
is a best approximant to $z$ in $G^\sigma$ in $G^\sigma$
right-invariant norms constructed by means of the Cartan--Killing
form, provided that $G$ is semisimple and that the decomposition 
$\ghe = \GG{p} \oplus \GG{k}$ is a Cartan decomposition.

Assume  that $\varphi$, the exact flow of the differential equation
\refp{eq:3.3}, has ${\cal R}$ as a reversing symmetry (i.e.\ $F \in
\GG{p}_\sigma$, where $\sigma(\varphi) = {\cal R} \varphi {\cal
  R}^{-1}$), while its approximation $\varphi_{h}$ has not. We perform
the polar decomposition 
\begin{equation}
  \label{eq:3.polar}
    \varphi_{h} = \psi_{h} \chi_{h}, \qquad \sigma(\psi_h) = \psi_h^{-1}, \quad \sigma(\phi_h) = \chi_h,
\end{equation}
i.e.\ $\psi_h$ has $\mathcal{R}$ as a reversing symmetry, while $\chi_{h}$ has ${\cal R}$ as a symmetry. 
Since the original flow has $\mathcal{R}$ as a reversing symmetry (and not symmetry), $\chi_h$ is the factor that we wish to eliminate.
We have $\psi_{h}^{2} = 
\varphi_{h} \sigma(\varphi_{h})^{-1}$. Hence the method obtained 
composing $\varphi_{h}$ with $\sigma(\varphi_{h})^{-1}$ has the 
reversing symmetry ${\cal R}$ every other step. To obtain $\psi_h$ we
need to extract the square root of the flow $\varphi_{h}
\sigma(\varphi_{h})^{-1}$. Now, if $\phi(t)$ is a flow, then its square
root is simply $\phi(t/2)$. However, if $\phi_h(t)$ is the flow of a
consistent numerical method ($p \geq 1$), namely the numerical
integrator corresponds to $\phi_h(h)$, it is not possible to evaluate
the square root $\phi_h(h/2)$ by simple means as it is not the same as the
numerical method with half the stepsize, $\phi_{h/2}(h/2)$. The
latter, however, offers an approximation to the square root: note that
\begin{displaymath}
  \phi_{\frac{h}2}\left(\frac{h}2\right)
  \phi_{\frac{h}2}\left(\frac{h}2\right) = \exp\left(hF +
  h(\frac{h}2)^p E_p\right) + \cdots,
\end{displaymath}
an expansion which, compared with $\phi_h(h)$, reveals that the error
in approximating the square root with the numerical method with half
the stepsize is of the order of
\begin{displaymath}
  \left( \frac{2^p-1}{2^p} \right) h^{p+1} E_p,
\end{displaymath}
a term that is subsumed in the local truncation error.
The choice
\begin{equation}
	\tilde \psi_{h}= \varphi_{h/2}\sigma(\varphi_{h/2})^{-1}=  \varphi_{h/2}\sigma(\varphi_{h/2}^{-1})
	\label{eq:approx_psi}
\end{equation}
 as an approximation to $\psi_h$ (we stress
that each flow is now evaluated at $t=h/2$),  yields a map that has the reversing  symmetry ${\cal
  R}$ at each step by design, since 
\begin{displaymath}
    \sigma(\tilde \psi_{h}) = \sigma(\varphi_{h/2} 
    \sigma(\varphi_{h/2}^{-1})) = \sigma(\varphi_{h/2}) 
    \varphi_{h/2}^{-1} = \tilde \psi_{h}^{-1}.
\end{displaymath}

Note that $\tilde \psi_{h} = \varphi_{h/2} \sigma(\varphi_{h/2}^{-1})$, where 
$\varphi_{h/2}^{-1}(t) = \varphi_{-h/2}^{*}(-t)$ is the inverse (or adjoint) method of   
$\varphi_{h/2}$. If $\sigma$ is given by \refp{eq:3.4}, then
$\sigma(\varphi_{h/2}^{-1}) = \varphi_{h/2}^*(h/2)$  and
this  algorithm is precisely the \emph{Scovel projection}
\cite{scovel91sni} originally proposed to 
to generate selfadjoint numerical schemes from an arbitrary integrator, and then generalized to the context of reversing symmetries \cite{mclachlan98nit}.

\begin{proposition}\label{th:3.5}
  The generalized Scovel projection is equivalent to choosing the
  $G_{\sigma}$-factor in the polar decomposition of a flow $\varphi_h$
  under the involutive automorphism $\sigma(\varphi) = {\cal R}
  \varphi {\cal R}^{-1}$, whereby square roots of flows are
  approximated by numerical methods with half the stepsize.
\end{proposition}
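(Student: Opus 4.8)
The plan is to read off the equivalence almost directly from the defining relations of the generalized polar decomposition, the only analytically non-trivial point being the interpretation of a \emph{square root of a flow} in the backward-error picture.

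First I would invoke Theorem~\ref{th:2.1}, in the Lie-group form established in \cite{munthe-kaas01gpd}, applied to the automorphism $\sigma(\varphi)={\cal R}\varphi{\cal R}^{-1}$ on the (formal) group of flows: for $\varphi_h$ sufficiently close to the identity there is a decomposition $\varphi_h=\psi_h\chi_h$ as in \refp{eq:3.polar}, with $\sigma(\psi_h)=\psi_h^{-1}$ and $\sigma(\chi_h)=\chi_h$, so that $\psi_h\in G_\sigma$ carries ${\cal R}$ as a reversing symmetry while $\chi_h\in G^\sigma$ carries it as a symmetry. Since $F\in\GG{p}$, the factor $\chi_h$ is precisely the spurious symmetric part, and the $G_\sigma$-factor $\psi_h$ is what we wish to retain. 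Applying $\sigma$ to $\varphi_h=\psi_h\chi_h$ and using that $\sigma$ is an automorphism gives $\sigma(\varphi_h)=\psi_h^{-1}\chi_h$, whence $\varphi_h\,\sigma(\varphi_h)^{-1}=\psi_h\chi_h\chi_h^{-1}\psi_h=\psi_h^2$. Thus, at the purely algebraic level, selecting the $G_\sigma$-factor amounts to extracting the square root, taken in $G_\sigma$, of the flow $\varphi_h\,\sigma(\varphi_h)^{-1}$.

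Second, I would make precise the meaning of this square root and show that the Scovel recipe realizes it to within the truncation order. For an exact flow $\phi(t)$ the square root along the one-parameter subgroup through the identity is simply $\phi(t/2)$; but when $\varphi_h(t)$ is the flow of a consistent method of order $p$, the object $\varphi_h(h/2)$ is not computable, and one replaces it by the available $\varphi_{h/2}(h/2)$. Expanding both with the symmetric BCH formula \refp{eq:bch} — the computation already displayed just before \refp{eq:approx_psi} — the discrepancy is $\bigl(\tfrac{2^p-1}{2^p}\bigr)h^{p+1}E_p+\cdots$, i.e.\ of the size of the local truncation error, so the substitution does not degrade the order. Setting $\tilde\psi_h:=\varphi_{h/2}\,\sigma(\varphi_{h/2})^{-1}=\varphi_{h/2}\,\sigma(\varphi_{h/2}^{-1})$ as in \refp{eq:approx_psi}, membership in $G_\sigma$ is verified exactly: $\sigma(\tilde\psi_h)=\sigma(\varphi_{h/2})\,\sigma^2(\varphi_{h/2}^{-1})=\sigma(\varphi_{h/2})\,\varphi_{h/2}^{-1}=\tilde\psi_h^{-1}$, so $\tilde\psi_h$ has ${\cal R}$ as a reversing symmetry at every step. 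Hence $\tilde\psi_h$ is precisely the $G_\sigma$-factor of the polar decomposition of $\varphi_h$, with the square root approximated by halving the stepsize.

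Finally I would specialize $\sigma$ to the time-reversal form \refp{eq:3.4}, for which $\sigma(\varphi_{h/2}^{-1})$ is the adjoint method $\varphi_{h/2}^*$; then $\tilde\psi_h$ is literally the composition defining the Scovel projection \cite{scovel91sni}, and in the reversing-symmetry setting its generalization \cite{mclachlan98nit}, which establishes the claimed equivalence. I expect the only real obstacle to be the second step — rigorously tying the abstract notion of a square root of a flow in $G_\sigma$ to ``run the numerical scheme with step $h/2$'', and checking that the error incurred is genuinely subsumed in the local truncation error; once this is in place, everything else is formal manipulation of the automorphism $\sigma$.
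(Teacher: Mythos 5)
Your proposal is correct and follows essentially the same route as the paper: the paper's justification of Proposition~\ref{th:3.5} is precisely the discussion preceding it, namely the factorization \refp{eq:3.polar}, the identity $\psi_h^2=\varphi_h\,\sigma(\varphi_h)^{-1}$, the observation that replacing the square root $\phi_h(h/2)$ by $\phi_{h/2}(h/2)$ incurs an error $\bigl(\tfrac{2^p-1}{2^p}\bigr)h^{p+1}E_p$ subsumed in the local truncation error, the exact verification that $\tilde\psi_h$ of \refp{eq:approx_psi} lies in $G_\sigma$, and the specialization of $\sigma$ to recover Scovel's composition. You even supply the short computation $\sigma(\varphi_h)=\psi_h^{-1}\chi_h$ that the paper leaves implicit, so nothing is missing.
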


\subsection{Connection with the Thue--Morse sequence and  differential equation with symmetries}
Another algorithm that can be related to the generalized polar
decomposition of flows is the application of the Thue--Morse sequence
to improve the preservation of symmetries by means of a numerical
integrator \cite{iserles99aps}. Given an involutive automorphism $\cal
\sigma$ and a numerical method $\varphi_h$ in a group $G$ of numerical
integrators, Iserles et al.\ \cite{iserles99aps} construct the sequence of
methods 
\begin{equation}
  \varphi^{[0]} := \varphi_h, \quad \varphi^{[k+1]} := \varphi^{[k]}
  \sigma (\varphi^{[k]}), \qquad k=0,1, 2,\ldots.
  \label{eq:thue}
\end{equation}
Since $\varphi^{[k]} = \sigma^0 \varphi^{[k]}$, it is easily
observed that the $k$-th method corresponds to composing $\sigma^0 \varphi^{[k]}$ and $\sigma^1 \varphi^{[k]}$ according to the $k$-th Thue--Morse sequence
$01101001\ldots$, as displayed below in Table~\ref{tab:3.1} (see \cite{thue77smp,morse21rgo}).
\begin{table}[h]
  \begin{center}
    \leavevmode
    \begin{tabular}{|c|l|l|}
      \hline 
      $k$ & $\varphi^{[k]}$ & sequence\\ \hline
      0   &  $\sigma^0(\varphi)$       & `0'\\
      1   &  $\sigma^0( \varphi) \sigma^1( \varphi)$ & `01' \\
      2   &  $\sigma^0(\varphi)\sigma^1( \varphi) \sigma^1(\varphi) \sigma^0(\varphi)$ &
      `0110'\\
      3   &  $\sigma^0(\varphi)\sigma^1( \varphi ) \sigma^1(\varphi) \sigma^0(\varphi) \sigma^1(\varphi)\sigma^0(\varphi) \sigma^0(\varphi) \sigma^1(\varphi)$ & `01101001'\\
      \hline
    \end{tabular}
    \caption{Thue--Morse iterations for the method $\varphi^{[k]}$.}
    \label{tab:3.1}
  \end{center}
\end{table}
Iserles et al.\ (\cite{iserles99aps}) showed, by a recursive use of the BCH formula, that each iteration improves by one
order the preservation of the symmetry ${\cal S}$ by a consistent numerical method, where ${\cal S}$ is
the involutive automorphism such that $\sigma (\phi) = {\cal S} \phi
{\cal S}^{-1}$. The main argument of the proof is that if the method $\phi^{[k]}$ has a given symmetry error, the symmetry error of $\sigma(\phi^{[k]})$ has the opposite sign. Hence the two leading symmetry errors cancel and $\phi^{[k+1]}$ has a symmetry error of one order higher. In other words, if the method $\varphi_h$ preserves ${\cal
  S}$ to order $p$, then $\varphi^{[k]}$ preserves the symmetry ${\cal
  S}$ to order $p+k$ every $2^k$ steps.  As $\sigma$ changes the sign of the symmetry error only, if a method $\varphi_h$ has a given approximation order $p$, so does $\sigma (\varphi_h)$. Thus $\phi^{[0]} = \sigma (\varphi_h)$ can be used as initial condition in \refp{eq:thue}, obtaining the conjugate Thue--Morse sequence $10010110\ldots$. By a similar argument, also 
   \begin{equation}
  \chi^{[0]} := \varphi_h \hbox{ or } \chi^{[0]} :=\sigma(\varphi_h), \quad \chi^{[k+1]} := \sigma (\chi^{[k]}) \chi^{[k]}, \qquad k=0,1, 2,\ldots,
  \label{eq:thue_compl}
\end{equation}
generate sequences with increasing order of preservation of symmetry.
 
 \begin{example}
 As an illustration of the technique, consider the spatial PDE  $u_t = u_x+u_y + f(u)$, where $f$ is analytic in $u$. Let the PDE be defined over the domain $[-L,L]\times[-L,L]$ with periodic boundary conditions and initial value $u_0$. If $u_0$ is symmetric on the domain, i.e.\ $u_0(y,x) = u_0(x,y)$, so is the solution for all $t$. The symmetry is $\sigma(u(x,y)) = u(y,x)$. Now, assume that we solve the equation by the method of alternating directions, where the method $\varphi$ corresponds to solving with respect to the $x$ variable keeping $y$ fixed, while $\sigma(\varphi)$ with respect to the $y$ variable keeping $x$ fixed. The symmetry will typically be broken at the first step. Nevertheless, we can get a much more symmetric solution if the sequence of the directions obeys the Thue--Morse sequence (iteration $\varphi^{[k]}$) or the equivalent sequence given by iteration $\chi^{[k]}$. This example is illustrated in Figures~\ref{fig:alt_dir}-\ref{fig:alt_dir2}. 
 
 \begin{figure}\centering
 \includegraphics*[width=\textwidth]{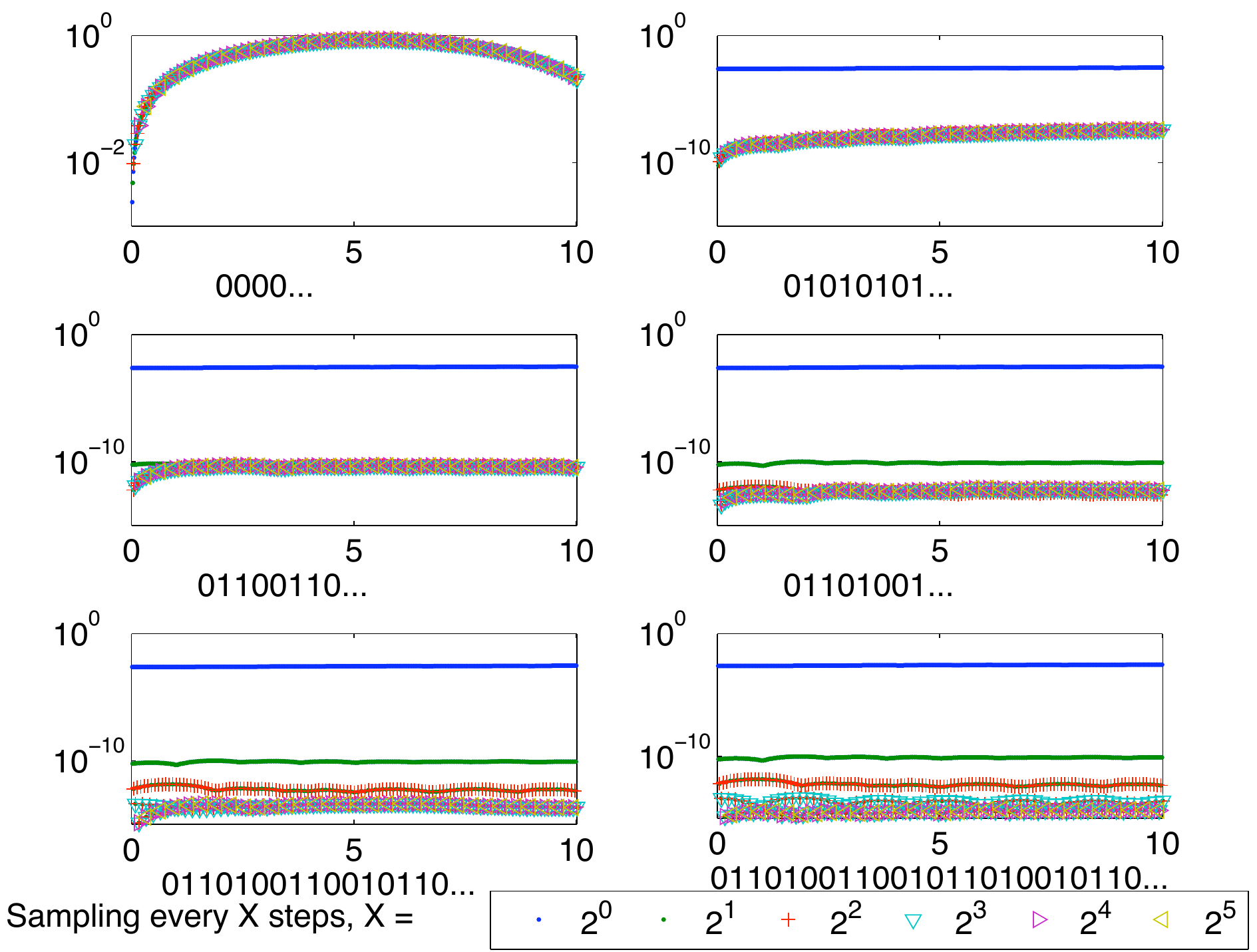}
 \caption{``Modes'' of the symmetry error $\max_{x,y} |u((t,x,y) - u(t,y,x)|$ for the method of alternating directions for the PDE $u_t = u_x+u_y + 2\times10^{-3} u^2$, on a square, with initial condition $u_0 = e^{-x^2-y^2}$ and periodic boundary conditions. The integration is performed using the first order method $u_{k+1} = u_k + h (D u_k + \frac12 f(u_k))$ (Forward Euler), where $D$ is a circulant divided difference discretization matrix of the differential operators $\partial_x, \partial_y$ (second order central differences). The experiments are performed with constant stepsize $h=10^{-2}$.  The order of the overall approximation is the same as the original method (first order only), the only difference is the order of the directions, chosen according to the Thue--Morse sequence. The different symbols correspond to different sampling rates: every step, every second step, fourth, eight, \ldots. From top left to  bottom right:  sequences '0', '01', '0110', '01101001', etc. }
 \label{fig:alt_dir}
 \end{figure}
 
 \begin{figure}[h]
 \centering
 \includegraphics*[width=.6\textwidth]{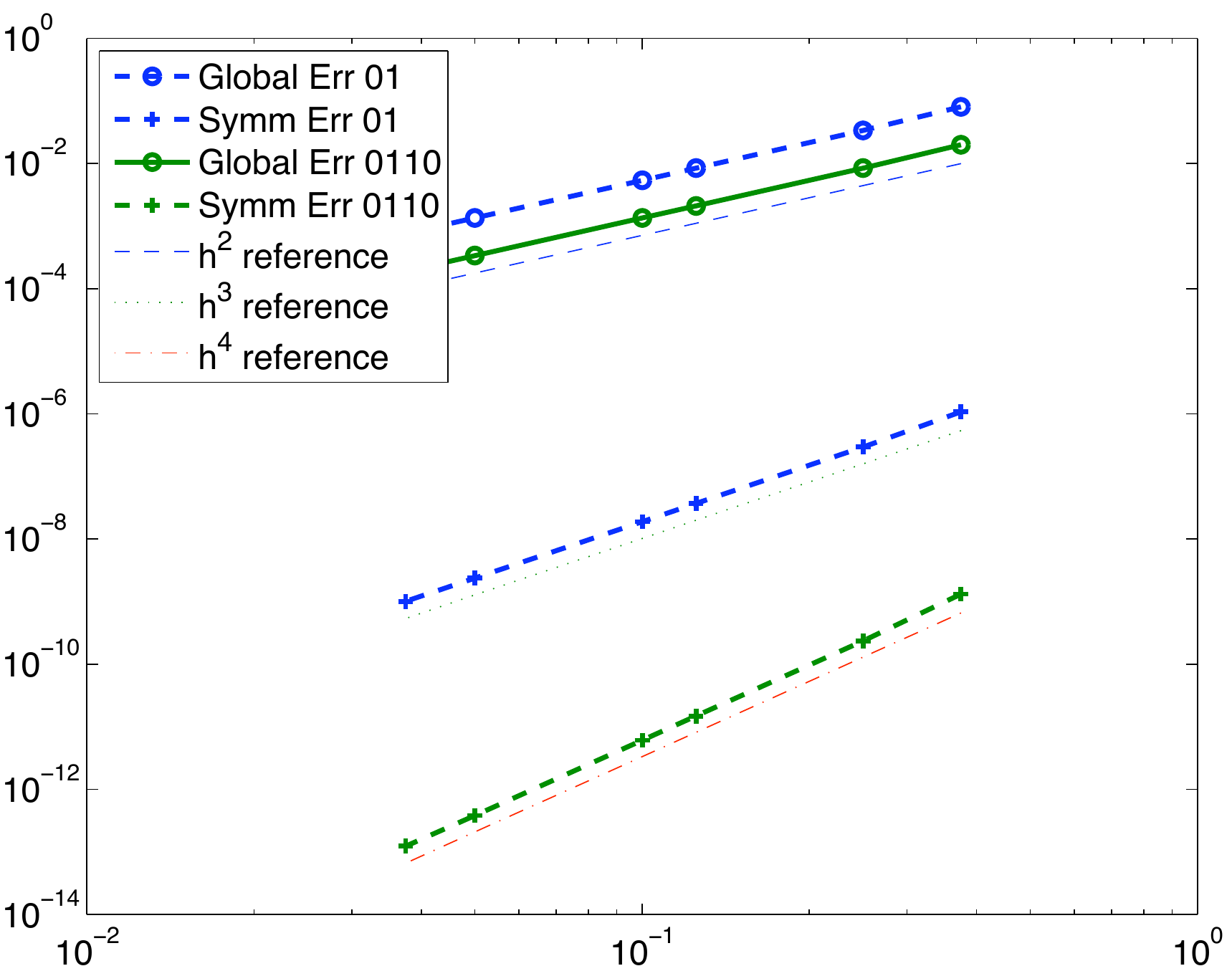}
 \caption{Global error  and symmetry error $\max_{x,y} |u((t,x,y) - u(t,y,x)|$ versus step size $h$ for the method of alternating directions for the PDE $u_t = u_x+u_y +2\times10^{-3} u^2$, as above. The basic method $\varphi$ (method ``0") is now a symmetric composition with the Heun method: half step in the $x$ direction, full step in the $y$ direction, half step in the $x$ direction, all of them performed with the Heun method (improved Euler). It is immediately observed that the symmetry error is improved by choosing the directions according to the Thue--Morse sequence. The order of the method (global error) remains unchanged (order two).
  }
 \label{fig:alt_dir2}
 \end{figure}
\end{example}

\subsection{Connections with a Yoshida-type composition and differential equations with
  symmetries}
\label{sec:3.6}
In a famous paper that appeared in 1990 (\cite{yoshida90coh}) Yoshida showed
how to construct high order time-symmetric integrators starting from lower
order time-symmetric symplectic ones. Yoshida showed that, if 
$\varphi$ is a selfadjoint numerical integrator of order $2p$, then 
\begin{displaymath}
    \varphi_{\alpha h} (\alpha t) 
    \varphi_{\beta h}(\beta t) \varphi_{\alpha h} (\alpha t) 
\end{displaymath}
is a selfadjoint numerical method of order $2p+2$ provided that the 
coefficients $\alpha$ and $\beta$ satisfy the condition
\begin{eqnarray*}
    2\alpha + \beta &=& 1 \\
    2\alpha^{2p+1} + \beta^{2p+1} &=& 0, 
\end{eqnarray*}
whose only real solution is
\begin{equation}
    \label{eq:yoshida.1}
    \alpha = \frac{1}{2-2^{1/(2p+1)}}, \qquad \beta = 
    1-2\alpha.
\end{equation}
In the formalism of this paper, time-symmetric methods correspond to 
$G_\sigma$-type elements with $\sigma$ as in \refp{eq:3.4} and it is 
clearly seen that the Yoshida technique can be used in general to 
improve the order of approximation of $G_{\sigma}$-type elements. 

A similar procedure can be applied to improve the order of the retention of symmetries
and not just reversing symmetries. To be more specific, let ${\cal 
S}$ be a symmetry of the given differential equation, namely ${\cal 
S}_{*} F = F {\cal S}$, with ${\cal S} \not = \id$, ${\cal S}^{-1} = 
{\cal S}$, ${\cal S}_{*}$ denoting the pullback of ${\cal S}$ to 
$\ghe = \CC{Vect}(M)$ (see \S\ref{sec:3.3}).  Here, the involutive 
automorphism is given by
\begin{displaymath}
    \sigma \varphi_{h}(t) = {\cal S} \varphi_{h}(t) {\cal S},
\end{displaymath}
so that
\begin{displaymath}
    \GG{p} = \{ P : {\cal S}_{*} P = - P{\cal S} \}, \quad
    \GG{k} = \{ K : {\cal S}_{*} K = K{\cal S} \}.
\end{displaymath}

\begin{proposition}\label{prop:yoshida_self}
Assume that $\varphi_{h}(t)$ is the flow of a self-adjoint numerical 
method of order $2p$, $ \varphi_{h}(t) = \exp(tF_{h})$, $    F_{h} = F + h^{2p}E_{2p} + h^{2p+2}E_{2p+2} + \cdots$,
where $E_{j} = P_{j} + K_{j}$, $P_{j} \in \GG{p}, K_{j} \in \GG{k}$ and $F$ has $\mathcal{S}$ as a symmetry. The composition
\begin{equation}
    \varphi^{[1]}_{h}(t) = \varphi_{ah}(at) \sigma(\varphi_{bh}(bt)) 
    \varphi_{ah}(at),
    \label{eq:yoshida.2}
\end{equation}
with 
\begin{displaymath}
    a = \frac{1}{2+2^{1/(2p+1)}}, \qquad b = 1-2a,
\end{displaymath}
has symmetry error $2p+2$ at $t=h$.
\end{proposition}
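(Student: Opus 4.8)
The plan is to compute $\log\varphi^{[1]}_h(h)$ by the symmetric Baker--Campbell--Hausdorff formula \refp{eq:bch} and to track only its $\GG{p}$-component, showing that the prescribed coefficients annihilate the leading symmetry-breaking term. First I would write the three factors of \refp{eq:yoshida.2} (at $t=h$) as exponentials of elements of $\ghe=\CC{Vect}(M)$. Put
\[ X := \log\varphi_{ah}(ah) = ah\,F_{ah} = ahF + (ah)^{2p+1}E_{2p} + (ah)^{2p+3}E_{2p+2} + \cdots . \]
For the middle factor, recall from \S\ref{sec:3.3} that $\D\sigma$ is an involutive algebra automorphism whose $\pm1$ eigenspaces are $\GG{k}$ and $\GG{p}$, and that $\sigma$ commutes with $\exp$; since $F\in\GG{k}$ (because $F$ has $\mathcal{S}$ as a symmetry) we obtain $\sigma(\varphi_{bh}(bh)) = \exp\!\big(bh\,\D\sigma(F_{bh})\big)$, hence
\[ Y := \log\sigma(\varphi_{bh}(bh)) = bhF + (bh)^{2p+1}(K_{2p}-P_{2p}) + (bh)^{2p+3}(K_{2p+2}-P_{2p+2}) + \cdots , \]
the crucial point being the sign flip of the $\GG{p}$-parts. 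Then $\varphi^{[1]}_h(h)=\exp(X)\exp(Y)\exp(X)=\exp(Z)$ with $Z$ given by \refp{eq:bch}.

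Next I would split $Z$ into the linear term $2X+Y$ and the commutator corrections. The $\GG{p}$-component of $2X+Y$ equals $\big(2a^{2p+1}-b^{2p+1}\big)h^{2p+1}P_{2p}+O(h^{2p+3})$, the remaining contribution being $O(h^{2p+3})$ because $F_{ah},F_{bh}$ are even series in $h$, so $X,Y$ contain only odd powers of $h$. The coefficients satisfy $2a+b=1$ and $b=2^{1/(2p+1)}a$, hence $b^{2p+1}=2a^{2p+1}$, i.e.\ $2a^{2p+1}-b^{2p+1}=0$, and the $h^{2p+1}$ term of $Z_{\GG{p}}$ disappears. Note that $2a^{2p+1}+b^{2p+1}\neq0$, so the $\GG{k}$-part of the $h^{2p+1}$ term of $Z$ survives: $\varphi^{[1]}_h$ stays of classical order $2p$, only the symmetry is improved. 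The sign difference with Yoshida's original condition is precisely the sign flip introduced by $\sigma$ in the middle factor, which is what allows all intermediate step sizes $ah,bh$ to be positive.

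It then remains to show that the commutator corrections in \refp{eq:bch} cannot reintroduce a $\GG{p}$-term of order $h^{2p+1}$ or $h^{2p+2}$. Here I would invoke the structural feature of the symmetric product displayed in \refp{eq:bch}: every term of $Z$ beyond $2X+Y$ is an iterated commutator of $X$ and $Y$ of length $\geq3$ (this is why only $T_Y(X),T_X(Y),T_X^2(Y),\dots$ occur, with no $[X,Y]$ term). Expanding such a length-$d$ commutator multilinearly in the $h$-homogeneous pieces of $X$ and $Y$, any term built only from the degree-one pieces $ahF,bhF$ is a bracket of copies of $F$ and therefore vanishes; hence every surviving term uses at least one error piece (of order $\geq h^{2p+1}$) together with at least $d-1\geq2$ further pieces of order $\geq h$, and so has order $\geq h^{2p+d}\geq h^{2p+3}$. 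Consequently $Z_{\GG{p}}=O(h^{2p+3})$, so the modified vector field $F^{[1]}_h=Z/h$ of $\varphi^{[1]}_h$ has $\GG{p}$-component $O(h^{2p+2})$, which is the claimed bound on the symmetry error. Finally, since $\sigma$ commutes with the time-reversal involution of \S\ref{sec:3.4} and \refp{eq:yoshida.2} is palindromic, $\varphi^{[1]}_h$ is again self-adjoint, so $F^{[1]}_h$ carries only even powers of $h$ and the symmetry error is generically exactly of order $2p+2$.

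I expect the main obstacle to be the degree-and-order bookkeeping in the last paragraph: justifying cleanly that \refp{eq:bch} has no length-two commutator and that all of its (infinitely many) higher commutators contribute to $Z_{\GG{p}}$ only at order $\geq h^{2p+3}$, so that the unique low-order contribution is the linear one killed by the coefficient condition.
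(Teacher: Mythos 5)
Your proposal is correct and follows essentially the same route as the paper: write the three factors as exponentials, note that $\D\sigma$ flips the sign of the $\GG{p}$-components of the middle factor, apply the symmetric BCH formula so that the leading $\GG{p}$-term is $(2a^{2p+1}-b^{2p+1})h^{2p+1}P_{2p}$, which vanishes by the choice of $a,b$, while the absence of a first-order commutator in \refp{eq:bch} pushes all correction terms to $\OO{h^{2p+3}}$. Your final paragraph simply makes explicit the degree counting that the paper summarizes as the $\OO{t^{3}h^{2p}}$ and $\OO{th^{2p+2}}$ remainders.
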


\begin{proof}
Write \refp{eq:yoshida.2} as
\begin{displaymath}
    \varphi^{[1]}_{h}(t) = \exp(at F_{ah}) \exp(bt \D\sigma(F_{bh})) 
    \exp(atF_{ah}).
\end{displaymath}
Application of the symmetric BCH formula, together with the fact that 
$\D\sigma$ acts by changing the signs on the $\GG{p}$-components 
only, allows us to write the relation \refp{eq:yoshida.2} as
\begin{eqnarray}
     \varphi^{[1]}_{h}(t) &=& \exp((2a+b)t F + (2(at)(ah)^{2p} +(bt) (bh)^{2p}) K_{2p}
     \label{eq:yoshida.3} \\
    &&\qquad \mbox{} +
     (2at(ah)^{2p}-bt(bh)^{2p})P_{2p} 
     + \OO{th^{2p+2}} + \OO{t^{3}h^{2p}} + \cdots), \nonumber
\end{eqnarray}
where the $\OO{th^{2p+2}}$ comes from the $E_{2p+2}$ term and the $\OO{t^3 h^{2p}}$ from the commutation of the $F$ and $E_{2p}$ terms (recall that no first order commutator appears in the symmetric BCH formula).
The numerical method is obtained letting $t=h$. We require $2a+b=1$ for consistency, and
$ 2a^{2p+1}-b^{2p+1}=0$ to annihilate the coefficient of $P_{2p}$, the lowest order $\GG{p}$-term. 
The resulting method $\varphi^{[1]}_{h}(t)$ retains the symmetry ${\cal S}$ to order 
$2p+2$, as the first leading symmetry error is a $\OO{h^{2p+3}}$ term.
\end{proof}

This procedure allows us to gain two extra degrees in the 
retention of symmetry per iteration, provided that the underlying method is selfadjoint, compared with the Thue--Morse 
sequence of \cite{iserles99aps} that yields one extra degree in symmetry per iteration but does not require selfadjointness. 
As for the Yoshida technique, the composition \refp{eq:yoshida.2} can be iterated $k$ times to obtain a time-symmetric method of order $2p$ that retains symmetry to order $2(p+k)$.
The disadvantage of \refp{eq:yoshida.2}, with respect to the classical Yoshida approach is the fact that the order of the method is retained (and does not increase by 2 units as the symmetry error). The main advantage is that all the steps are positive, in particular the second step, $b$, whereas the $\beta$ is always negative for $p\geq 3$ and typically larger than $\alpha$, requiring step-size restrictions for stiff methods. In the limit, when $p\to \infty$, $a,b\to 1/3$, i.e.\ the step lengths become equal. Thus, the proposed technique for improving symmetry is of particular interest in the context of stiff problems.
This is illustrated by the following example and Figure~\ref{fig:stiff}. 

\begin{example}
\label{ex:1}
Consider the PDE $u_t = \nabla^2 u - u (u-1)^2$, defined on the square $[-1,1]\times [-1,1]$, with a gaussian initial condition, $u(0) = \mathrm{e}^{-9x^2-9y^2}$, and periodic boundary conditions. The problem is semi-discretized on a uniform and isotropic mesh with spacing $\delta=0.1$ and is reduced to the set of ODEs $ \dot{u} = (D_{xx} + D_{yy}) u + f(u)=F(u)$, where $D_{xx}=I \otimes D_2$ and $D_{yy}= D_2 \otimes I$, $D_2$ being the circulant matrix of the standard second order divided differences, with stencil $\frac1{\delta^2}[1,-2,1]$. For the time integration, we consider the splitting $F = F_1 + F_2$, where $F_1 = D_{xx} + \frac12 f$ and $F_2 = D_{yy} + \frac12f$ and the second order self-adjoint method:\footnote{We have simply split the nonlinear term in two equal parts. Surely, it can be treated in many different ways, but that is besides the illustrative scope of the example.}
\begin{equation}
	\varphi_h = \phi^{\mathrm{BE}}_{\frac{h}2,F_1}\circ\phi^{\mathrm{BE}}_{\frac{h}2,F_2}\circ	\phi^{\mathrm{FE}}_{\frac{h}2,F_2}\circ\phi^{\mathrm{FE}}_{\frac{h}2,F_1}.
	\label{eq:pde}
\end{equation}
 We display the global error and the symmetry error for time-integration step sizes $h =3 h_0 \times [1, \frac12, \ldots,\frac{ 1}{64}]$, where the parameter $h_0$ is chosen to be the largest step size for which the basic method $\varphi$ in \refp{eq:pde} is stable. The factor $3$ comes from the fact that both the Yoshida and our symmetrization method \refp{eq:yoshida.2} require 3 sub-steps of the basic method.  So, one step of the Yoshida and our symmetrising composition  can be expected to cost the same as the basic method \refp{eq:pde}.
 
 As we can see in Figure~\ref{fig:stiff}, the Yoshida technique, with $\alpha = 1/(2-2^{1/3})$ and $\beta=1-2\alpha$, does the job of increasing the order of accuracy of the method from two to four, and so does the symmetry error. However, since $\alpha<1/2$, the $\beta$-step is negative, and, as a consequence, it is observed that the method fails to converge for the two largest values of the step size. Conversely, our symmetrization method \refp{eq:yoshida.2} has $a = 1/(2+2^{1/3})$ and $b=1-2a$, with $b$ positive, and the method converges for all the time-integration steps.
As expected, the order is not improved, but the symmetry order is improved by two units. The symmetry $\sigma$ is applied by transposing the matrix representation $u_{i,j}$ of the solution at $(i\delta, j\delta)$, before  and after the intermediate $b$-step. Otherwise, the two implementations are identical.

\begin{figure}
\centering
\includegraphics*[width=.45\textwidth]{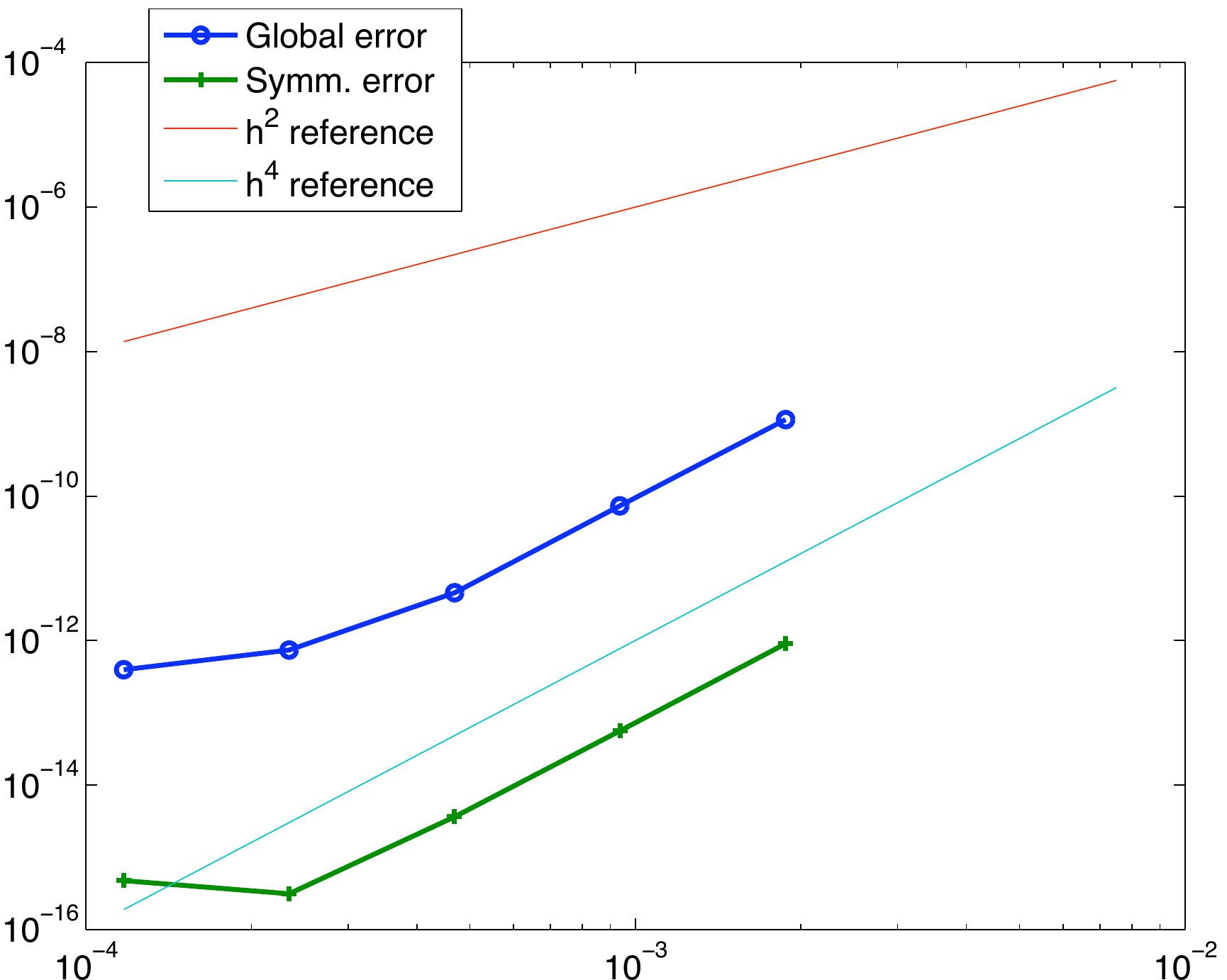}
\includegraphics*[width=.45\textwidth]{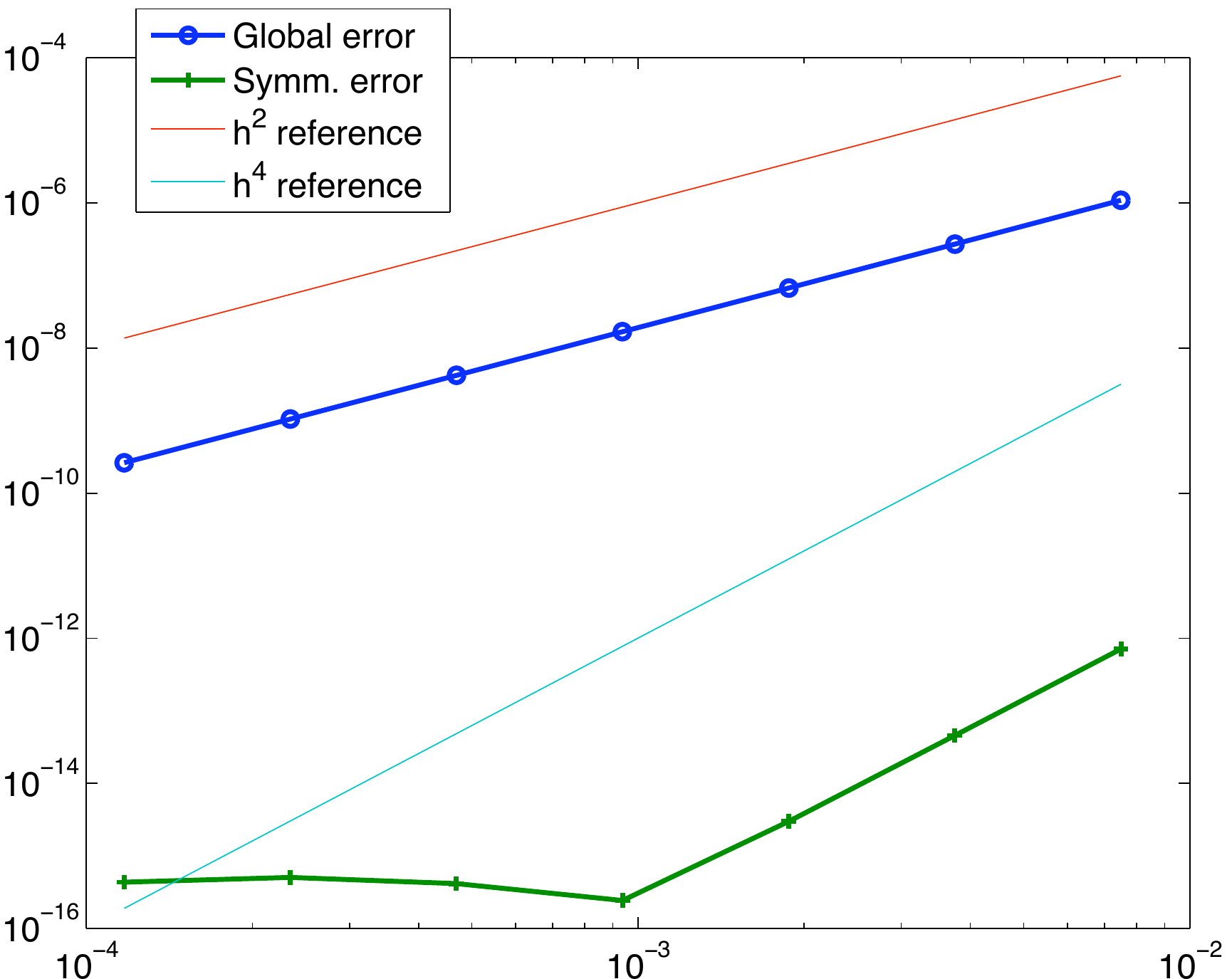}
\caption{Error versus step size for the Yoshida technique (left) and the symmetrization technique \refp{eq:yoshida.2}  (right) for the basic self-adjoint method \refp{eq:pde} applied to a semidiscretization of $u_t = \nabla^2 u - u (u-1)^2$. The order with the Yoshida techique increases by two units, but the method does not converge for the two largest values of the step size. Our symmetrization technique improves by two units the order of retention of symmetry (but not global error). Due to the positive step-sizes, it converges also for the two largest values of the step size.
See text for details.}
\label{fig:stiff}
\end{figure}
\end{example}

\section{Conclusions and remarks}
\label{sec:5}
In this paper we have shown that the algebraic structure of Lie triple systems and the factorization properties of symmetric spaces  can be used as a tool to: 1) understand and provide a unifying approach to the analysis of a number of different algorithms; and 2) devise new algorithms with special symmetry/reversing symmetry properties in the context of the numerical solution of differential equations. 
In particular, we have seen that symmetries are more difficult to retain (to date, we are not aware of methods that can retain a generic involutive symmetry in a finite number of steps), while the situation is simpler for reversing symmetries, which can be achieved in a finite number of steps using the Scovel composition. 
So far, we have considered the most generic setting where the only allowed operations are non-commutative compositions of maps (the map $\varphi$, its transformed, $\sigma(\varphi)$, and their inverses). If the underlying space is linear and so is the symmetry, i.e.\ $\sigma (\varphi_1 + \varphi_2) = \sigma(\varphi_1) + \sigma(\varphi_2)$,  the map
$	\tilde \varphi_h = \frac{\varphi_h + \sigma(\varphi_h)}2 $
obviously satisfies the symmetry $\sigma$, as $\sigma(\tilde \varphi_h) = \tilde \varphi_h$. Because of the linearity and the vector-space property, we can use the same operation as in the tangent space, namely we identify $\sigma$ and $\D \sigma$. This is in fact the most common symmetrization procedure for linear symmetries in linear spaces. For instance, in the context of the alternating-direction examples, a common way to resolve the symmetry issue is to first solve, say, the $x$ and the $y$ direction, solve the $y$ and the $x$ direction with the same initial condition, and then average the two results.

In this paper we did not mention the use and the development of similar concepts in a more strict linear algebra setting.\footnote{It seems that numerical linear algebra authors prefer to work with Jordan algebras (see \S2), rather than Lie triple systems. We believe that the LTS description is natural in the context of differential equations and vector fields because it fits very well with the Lie algebra structure of vector fields. }
Some recent  works deal with the further understanding of scalar products and structured factorizations, and more general computation of matrix functions preserving group structures, see for instance  \cite{mackey06sfi,higham05fpm,higham10tcg} and references therein. 
Some of these topics are covered by other contributions in the present BIT issue, which we strongly encourage the reader to read to get a more complete picture of the topic and its applications.

\vspace*{30pt}
\noindent
{\bf Acknowledgements.} H.M.-K., G.R.W.Q.  and A.Z. wish to thank the Norwegian
Research Council and the Australian Research Council for financial support.
Special thanks to Yuri Nikolayevsky who gave us important pointers to the literature on symmetric spaces.

\nocite{iserles00lgm}
\nocite{munthe-kaas99hor}
\bibliographystyle{plain}

\end{document}